\newcommand{\R}{\mathbb{R}}
\newtheorem{theorem}{Theorem}[section]
\newtheorem{lemma}[theorem]{Lemma}
\newtheorem{proposition}[theorem]{Proposition}
\newtheorem{remark}[theorem]{Remark}
\newtheorem{example}[theorem]{Example}
\def\vd{\mathrm{d}}
\def\bthm{\begin{theorem}}
\def\ethm{\end{theorem}}
\def\blem{\begin{lemma}}
\def\elem{\end{lemma}}
\def\brem{\begin{remark}}
\def\erem{\end{remark}}
\def\bexm{\begin{example}}
\def\eexm{\end{example}}
\def\ecor{\end{corollary}}
\def\r{\right}
\def\l{\left}
\def\be{\begin{equation}}
\def\de{\end{equation}}
\def\F{\mathscr{F}}
\def\R{\mathbb{R}}
\def\E{\mathbb{E}}
\def\M{\mathscr{M}}
\def\G{\mathcal{G}}
\def\P{\mathbb{P}}
\def\O{\mathscr{O}}
\def\Ric{\text{Ric}}
\def\|{/\!/}
\def\proclaim#1{\bigskip\noindent{\bf #1}\bgroup\it\  }
\def\endproclaim{\egroup\par\bigskip}
\begin{document}
\setcounter{page}{1}
\title{A probabilistic method for gradient estimates of some geometric flows}
\author{Xin Chen$^{\dag}$, Li-Juan Cheng$^{\ddag}$ and Jing Mao$^{\sharp,\ast}$}
\date{}
\protect\footnotetext{\!\!\!\!\!\!\!\!\!\!\!\!{ $^\ast$Corresponding
author} \\
{ MSC 2010: 53C44; 58J65}
\\{ ~~Key Words: Brownian motion; Local martingales; Gradient estimates; Time-changing metric; Geometric flows.} \\}
\maketitle ~~~\\[-15mm]
\begin{center}
{\footnotesize  $^{\dag}$ Department of Mathematics, Shanghai Jiao
Tong University, Shanghai, 200240, P.R. China.
 \\
Email: chenxin\_217@hotmail.com}

 {\footnotesize  $^{\ddag}$ Department of Applied Mathematics, Zhejiang University of Technology, Hangzhou, 310023, China \\
Email: chenglj@zjut.edu.cn} \\
 {\footnotesize  $^{\sharp}$Department of Mathematics, Harbin Institute of Technology
(Weihai), Weihai, 264209, China \\
Email: jiner120@163.com, jiner120@tom.com}
\end{center}


\begin{abstract}
In general, gradient estimates are very important and necessary
for deriving convergence results in different geometric flows, and
most of them are obtained by analytic methods. In this paper, we
will apply a stochastic approach to  systematically give
 gradient estimates for some important geometric quantities under
the Ricci flow, the mean curvature flow, the forced mean curvature
flow and the Yamabe flow respectively. Our conclusion gives another
example that probabilistic tools can be used to simplify proofs for
some problems in geometric analysis.
\end{abstract}

\markright{\sl\hfill  X. Chen, L. J. Cheng and J. Mao\hfill}

\section{Introduction}
\renewcommand{\thesection}{\arabic{section}}
\renewcommand{\theequation}{\thesection.\arabic{equation}}
\setcounter{equation}{0} \setcounter{maintheorem}{0}

The study on geometric flows is an important and hot topic in
geometric analysis within the past three decades. Since R. S.
Hamilton introduced the so-called Ricci-Hamilton flow in \cite{Ham}
and used it to deform the Riemannian metric on a $3$-dimensional
compact manifold with positive Ricci curvature to get the metric of constant positive
curvature, the most marvellous application of the Ricci flow so far
is that G. Perelman \cite{p1} used the Ricci flow to give an amazing
proof for the $3$-dimensional Poincar\'{e} Conjecture. At the same time, many
mathematicians tried to consider other curvature flows, which also
have a wide range of applications in geometric analysis, for
instance, the (smooth) mean curvature flow introduced by G. Huisken
\cite{Hus}.

Such geometric flows are in essential considering given initial
manifolds (or submanifolds) evolving under prescribed geometric
evolution equations, which often correspond to systems of
second-order parabolic or hyperbolic partial differential equations.
Usually, gradient estimates for geometric quantities (such as
the Riemannian curvature, the second fundamental form, etc) on the
evolving manifolds have important applications in many problems
concerning the geometric flows, e.g. ``the existence of a solution
to the corresponding evolution equation on some non-compact
manifold'', ``the convergence property of the evolving manifolds'',
``whether singularities appear or not during the evolving process,
and then if they appear, how to analyse their properties?'', and so
on. For the case of the Ricci flow, we refer  readers to
\cite[Chapters 6-8]{BLN}, \cite{Shi}, \cite{Shi1} for detailed
introduction about the application of  (local) gradient estimates
in such problems.

Here we also want to give some remarks about the stochastic methods
on  gradient estimates for various differential equations or systems
on Riemannian manifolds. The coupling methods were first applied by
M. Cranston \cite{CR} to   gradient estimates for the solutions to
elliptic equations on a Riemannian manifold. Furthermore, we refer
readers to \cite[Chanpter 2]{Wbook1}, \cite{Wbook2} and references
therein for  further development of such coupling methods. A.
Thalmaier and F.-Y. Wang \cite{TW} (see also \cite{ADT}) used the
martingale methods to give a stochastic proof for local gradient
estimate of the harmonic function on a regular domain. Such
martingale methods were also used to tackle gradient estimates for
various different models, e.g. the harmonic maps between manifolds
investigated by M. Arnaudon and A. Thalmaier \cite{AT}, the tensor
bundles associated with a (linear) heat equation studied by B. K.
Driver and A. Thalmaier \cite{Driver}, and the scalar curvature
under a two dimensional Ricci flow investigated by K. A. Coulibaly
\cite{KC} (through using the Brownian motion associated with a
time-changing metric, see also \cite{ACT}).

As observed in the references above, the stochastic methods can be
applied to  gradient estimates for the linear parabolic or
elliptic equations, while most of gradient estimates for
(non-linear) geometric flows are obtained by analytic methods. In
this article, we will generalize the methods in \cite{Driver} and
\cite{TW} to investigate   gradient estimates for many geometric
flows. In particular, such methods will be systematically used to
give   gradient estimates for some geometric quantities of a large
class of geometric flows, e.g. the Riemannian curvature of the Ricci
flow, the second fundamental form of the mean curvature flow and
forced mean curvature flows, the scalar curvature of the Yamabe
flow, and so on.

The paper is organized as follows. In Section 2, we provide a brief
introduction of the Brownian motion on a manifold associated with a
time-changing metric introduced in \cite{ACT,KC}, and then present
 an It\^{o} formula for  tensor fields over a  Brownian motion with
respect to some time-changing metric (see Proposition \ref{p1} for
the precise statement), which will be frequently used in this
article. In Section 3, inspired by \cite{TW}, we first show local
martingale arguments for the quantities determined by the equation
(\ref{e3-1}) (see Theorem \ref{T1}), and then based on this result,
we give a general derivative formula for smooth sections of tensor
bundles $T^{m,k}M$ satisfying specific equations. In the last
section, as applications of Theorem \ref{c1}, we can always give
local gradient estimates for the Riemannian curvature tensor of the
Ricci flow (see Theorem \ref{T4-1}), the second fundamental form for
the mean curvature flow (see Theorem \ref{T4-2}) and two kinds of
forced mean curvature flows (see Theorems \ref{theoremxx1} and
\ref{theoremxx2}), and the scalar curvature of the Yamabe flow (see
Theorem \ref{theoremxx3} for the details).

\section{Brownian motion associated with the time changing metric}
\renewcommand{\thesection}{\arabic{section}}
\renewcommand{\theequation}{\thesection.\arabic{equation}}
\setcounter{equation}{0} \setcounter{maintheorem}{0}

 Let $M$ be an $n$-dimensional differential manifold equipped with a time-changing
Riemannian metric $\{g_t\}_{t\in [0,T_c)}$,\ $T_c\in (0,\infty]$,
such that $(M,g_t)$ is  complete for every $t \in [0,T_c)$. Let
$\nabla^t$ be the $g_t$-Levi-Civita connection on tensor fields. We
denote the frame bundle over $M$ by $\F(M)$ with $\pi: \F(M)
\rightarrow M$ being the associated canonical projection from
$\F(M)$ to $M$. We fix the standard orthonormal basis
$\{e_i\}_{i=1}^n$ of $\R^n$, i.e., $e_i=(0,\cdots,
\underbrace{1}_{i\ {\rm th}}, \cdots, 0)$, $1\le i \le n$.

For each fixed $t \in [0,T_c)$, we define a smooth vector field
$H_i^t$ on $\F(M)$ such that for every $u \in \F(M)$,
$H_i^t(u):=\mathbf{H}^t(u e_i)$, where $\mathbf{H}^t$ is the
horizontal lift operator with respect to $\nabla^t$. We denote the
set of $n \times n$ non-degenerate matrices by $GL_n$, and the set
of $n \times n$ matrices by $\M$. For each $u \in \F(M)$, let  $l_u:
GL_n \rightarrow \F(M)$  be the left multiplication operator defined
as
\begin{eqnarray} \label{e1}
\pi(l_u(A))=\pi(u),\ \qquad \ l_u(A)e_i=u(Ae_i),\ \ \ \mbox{for
all}\ \ A \in GL_n,\ 1\le i \le n.
\end{eqnarray}
Let $\{E_{ij}\}_{i,j=1}^n$ be the canonical basis of $\M$ and
$V_{ij}(u):=Tl_u(E_{ij})$, where $Tl_u: \M \rightarrow T_u \F(M)$ is
the differential of $l_u$.

Let $T_x^*M:=(T_x M)^*$ be the dual space of $T_x M$, and for
non-negative integers $m,k$, the bundle of $(m,k)$ tensors
$T^{m,k}M$ is defined as follows
\begin{equation*}
T^{m,k}M:=\bigcup_{x \in M}(T_x M)^{\otimes m}\otimes (T_x^*
M)^{\otimes k},
\end{equation*}
where $\otimes$ is the tensor product operator. We denote by
$\Gamma(T^{m,k}M)$ the set of smooth sections of $T^{m,k}M$, and for
every pair of tensor bundles $E$ and $\tilde E$, denote by
$\Gamma({\rm Hom}(E;\tilde E))$ the collection of smooth
sections of linear operator from $E$ to $\tilde E$. Clearly, we can
identify $\Gamma({\rm Hom}(E;\tilde E))$ with $\Gamma\big(E^*
\otimes \tilde E\big)$.

Now we can introduce the Brownian motion on $M$ with respect to the
time-changing metric $\{g_t\}_{t \in [0,T_c)}$ which is first given
in \cite{ACT,KC}.
Let $(\mathscr{O},g_t)$ be the orthonormal frame bundle with respect
to the metric $g_t$.
 Now, consider the following Stratonovich stochastic differential equation (write SDE for short)
\begin{equation}\label{e2}
\begin{cases}
& dU_t=\sum_{i=1}^n \sqrt{2}H_i^t(U_t)\circ \vd W_t^i-\frac{1}{2}\sum_{i,j=1}^n G_{ij}(t,U_t)V_{ij}(U_t)\vd t,  \qquad t\in[0,\zeta),\\
& U_t|_{t=0}=u_0
\end{cases}
\end{equation}
 on $M$, where
$W_t$ is a $\R^n$-valued Brownian motion on a probability space
$(\Omega, \F, \P)$ with corresponding filtration $\F_t$,
 $u_0 \in (\O,g_0)$ is non-random with $\pi(u_0)=x_0$,
$G_{ij}(t,u):$ $=\partial_t g_t(\pi(u))\big(ue_i,ue_j\big)$ for
every $u \in \F(M)$, and $\zeta \le T_c$ is the maximal time for the
solution of (\ref{e2}). Clearly, $\zeta=T_c$ if $M$ is compact, and
moreover, by \cite[Theorem 1]{KP}, if $g_t$ evolves as the super
backwards Ricci flow (i.e., $\partial_t g_t \le \Ric_t$) and
$(M,g_t)$ is complete for every $t \in [0,T_c)$, then $\zeta=T_c$.
Besides, by \cite[Proposition 1.2]{KC} (see also \cite{ACT}), we
have $U_t \in (\O,g_t)$ for every $t \in [0, \zeta)$, and $\langle
u_0e_i, u_0e_j \rangle_{g_0}= $ $\langle U_te_i, U_te_j
\rangle_{g_t}$. Then we define $X_t:=\pi(U_t)$ and $U_t$ as the
Brownian motion and the stochastic horizontal lift associated with
the time-changing metric $\{g_t\}$, respectively. Let
$/\!/_{s,t}:=U_t U_s^{-1}: T_{X_s}M$ $\rightarrow T_{X_t}M$, $0 \le
s \le t < \zeta$ be the stochastic parallel translation  along
$X_{\cdot}$. Clearly, $/\!/_{s,t}$ is an isometry from $(T_{X_s}M,
g_s)$ to $(T_{X_t}M, g_t)$.

We can define the scalarization of a tensor bundle as in \cite{Hsu}.
For each $u \in \F(M)$ with $\pi(u)=x$, let $Y_i=ue_i$, $1 \le i \le
n$. Then $\{Y_i\}$ is a basis of $T_x M$ with the dual basis
$\{Y^i\}\subseteq T^*_x M$, and for every $\theta \in T^{m,k}_x M$,
it has a unique expression under $\{Y_i\}, \{Y^i\}$ as follows
\begin{equation*}
\theta=\theta_{j_1\dots j_k}^{i_1\dots i_m}(u)Y_{i_1} \otimes \dots
\otimes Y_{i_m}\otimes Y^{j_1}\otimes \dots \otimes Y^{j_k},
\end{equation*}
for some $\theta_{j_1\dots j_k}^{i_1\dots i_m}(u)\in \R$ (which
depends on $u$), where the upper and lower indices mean the
summation of different items. Now, we can define the scalarization
$\tilde \theta$ at $u \in \F(M)$ by
\begin{equation*}
\tilde \theta(u):=\theta_{j_1\dots j_k}^{i_1\dots i_m}(u)e_{i_1}
\otimes \dots \otimes e_{i_m}\otimes e^{j_1}\otimes \dots \otimes
e^{j_k},
\end{equation*}
where $\{e^i\}$ denotes the dual basis of $\{e_i\}$ in $\R^n$.

  We define the stochastic parallel translation (on tensors)
$/\!/_{0,t}: T_{x_0}^{m,k}M \rightarrow T_{X_t}^{m,k}M$,
$t\in[0,\zeta)$ along $X_{\cdot}$ as follows, for every (random)
$\vartheta \in T_{X_t}^{m,k}M$, $ /\!/_{0,t}^{-1}\vartheta:=\tilde
\vartheta(U_t)$, where $\tilde \vartheta(U_t)$ is the scalarization
of $\vartheta$ at $U_t$. In particular, for a smooth section $\theta
\in \Gamma(T^{m,k}M)$,
\begin{eqnarray*}
/\!/_{0,t}^{-1} \theta(\pi(U_t))=\theta_{j_1\dots j_k}^{i_1\dots
i_m}(U_t)e_{i_1} \otimes \dots \otimes e_{i_m}\otimes e^{j_1}\otimes
\dots \otimes e^{j_k}.
\end{eqnarray*}
 Clearly, here we identify $(T_{x_0}M, g_0)$ with $\R^n$
by the isometry $U_0=u_0$. \emph{Now, we make an agreement that
throughout this paper, for convenience, we write $/\!/_{0,t}$ as
$/\!/_{t}$ and identify $(T_{x_0}^{m,k} M, g_0)$ with $T^{m,k}\R^n$
by the isometry $U_0=u_0$ once the initial point $u_0$ of the
stochastic horizontal lift is specified}. So we use $|\cdot|$ and
$\langle \cdot,\cdot \rangle$ to denote the norm and metric
respectively both on $T^{m,k}\mathbb{R}^n$ and
$(T^{m,k}_{x_0}M,g_0)$.

For every $\theta \in \Gamma(T^{m,k}M)$, $t \in [0,T_c)$, we are
able to define a linear operator $\G^t \in \Gamma({\rm
Hom}(T^{m,k}M;$ $T^{m,k}\R^n))$ as follows, for each $u\in
\mathscr{F}(M)$,
\begin{equation}\label{e1-1}
\begin{split}
\G^t(u)\theta(\pi u)&:=\sum_{p,q=1}^n
\frac{G_{pq}(t,u)}{2}\sum_{l=1}^k\sum_{j_l=p}\theta_{j_1\dots
j_k}^{i_1\dots i_m}(u)e_{i_1} \otimes \dots \otimes e_{i_{m}}
\otimes e^{j_1} \otimes \dots \otimes \underbrace{e^{q}}_{l\ {\rm th}} \dots \otimes e^{j_k}\\
&-\sum_{p,q=1}^n
\frac{G_{pq}(t,u)}{2}\sum_{l=1}^m\sum_{i_l=q}\theta_{j_1\dots
j_k}^{i_1\dots i_m}(u)e_{i_1} \otimes \dots \otimes
\underbrace{e_{p}}_{l\ {\rm th}} \dots\otimes e_{i_{m}} \otimes
e^{j_1} \otimes \dots \otimes e^{j_k}.
\end{split}
\end{equation}

By applying a similar method to \cite[Theorem 3.2]{KC}, we can show
the following It\^{o} formula.
\begin{proposition}\label{p1}
For every $\theta \in \Gamma(T^{m,k}M)$ and stopping time
$\tau<\zeta$, we have
\begin{equation}\label{p1.1}
\begin{split}
& \vd \big(\|_t^{-1}\theta(X_t)\big)=\sum_{i=1}^n\sqrt{2}
\|_t^{-1}\nabla^t_{U_t \vd  W_t^i}\theta(X_t)
+\Big(\|_t^{-1}\Delta^t\theta(X_t)-\G^t(U_t)\theta(X_t)\Big)\vd t, \
\quad 0\le t <\tau,
\end{split}
\end{equation}
where $\Delta^t: \Gamma(T^{m,k}M) \rightarrow \Gamma(T^{m,k}M)$ is
the connection Laplacian operator on $\Gamma(T^{m,k}M)$ associated
with the metric $g_t$, i.e., for any $x\in M$ and $u\in (\O, g_t)$
with $\pi u=x$,
$\Delta^t\theta(x):=\sum_{i=1}^n(\nabla^t)^2\theta(ue_i,ue_i)$.
\end{proposition}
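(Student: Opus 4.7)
The plan is to reduce the statement to a Stratonovich--It\^o formula for the scalarization $\tilde\theta:\F(M)\to T^{m,k}\R^n$ along the $\F(M)$-valued diffusion $U_t$, and then to recognize each of the resulting terms on $\F(M)$ as the right-hand side of (\ref{p1.1}). The bridge between the two viewpoints is the identity $\|_t^{-1}\theta(X_t)=\tilde\theta(U_t)$ together with its analogues $\|_t^{-1}\nabla^t_{U_te_i}\theta(X_t)=\widetilde{\nabla^t_{U_te_i}\theta}(U_t)$ and $\|_t^{-1}\Delta^t\theta(X_t)=\widetilde{\Delta^t\theta}(U_t)$.

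First I would apply the Stratonovich chain rule to the time-independent function $\tilde\theta$ composed with the solution of (\ref{e2}), obtaining
\begin{equation*}
\vd\tilde\theta(U_t)=\sqrt{2}\sum_{i=1}^n (H_i^t\tilde\theta)(U_t)\circ \vd W_t^i-\frac{1}{2}\sum_{i,j=1}^n G_{ij}(t,U_t)(V_{ij}\tilde\theta)(U_t)\,\vd t.
\end{equation*}
Since $H_i^t$ is by construction the horizontal lift with respect to $\nabla^t$, the standard intertwining identity gives $(H_i^t\tilde\theta)(u)=\widetilde{\nabla^t_{ue_i}\theta}(u)$; iterating and using $U_t\in(\O,g_t)$ yields $\sum_i(H_i^t)^2\tilde\theta(U_t)=\widetilde{\Delta^t\theta}(U_t)$. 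Converting the Stratonovich stochastic integral into It\^o form then produces exactly the drift $\widetilde{\Delta^t\theta}(U_t)\,\vd t$, while the mixed brackets arising from the vertical drift $G_{ij}V_{ij}$ vanish as $\vd t\cdot \vd W$ terms.

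The vertical piece $V_{pq}\tilde\theta$ is then computed directly from the definition of the scalarization. Under the frame change $u\mapsto u\cdot\exp(sE_{pq})$, the frame vectors $Y_j:=ue_j$ transform as $Y_j\mapsto Y_j+s\delta_{jq}Y_p+O(s^2)$ and the dual coframe as $Y^j\mapsto Y^j-s\delta_{jp}Y^q+O(s^2)$; expressing the invariant tensor $\theta$ in the new basis and reading off the coefficients gives a linear transformation rule for $\theta^{i_1\dots i_m}_{j_1\dots j_k}(u)$ whose $s$-derivative at $s=0$ splits into two combinatorial sums, one coming from the covariant indices and one from the contravariant ones, with opposite signs. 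Comparing term by term with (\ref{e1-1}) and using the symmetry of $G_{pq}$ in $p,q$, one obtains $\frac{1}{2}\sum_{p,q}G_{pq}(t,U_t)V_{pq}\tilde\theta(U_t)=\G^t(U_t)\theta(X_t)$. Substituting this together with the horizontal identifications into the It\^o decomposition of $\vd\tilde\theta(U_t)$ and invoking the bridge identities of the first paragraph yields (\ref{p1.1}) on $[0,\tau)$.

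The main technical obstacle is precisely this vertical-derivative bookkeeping: one must handle a general $(m,k)$ simultaneously, track the opposite signs from the frame and coframe transformations, and match the resulting sum position-by-position with the definition of $\G^t$ in (\ref{e1-1}). Once this algebraic identification is in place, the remaining ingredients — the intertwining of $H_i^t$ with $\nabla^t$, the Bochner-type identification of the horizontal sum with $\Delta^t$, and the Stratonovich-to-It\^o correction — are standard and follow the scalar model carried out in \cite{KC}.
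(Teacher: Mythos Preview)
Your proposal is correct and follows essentially the same route as the paper: apply the It\^o/Stratonovich formula to the scalarization $\tilde\theta(U_t)$ along the SDE (\ref{e2}), identify the horizontal terms $H_i^t\tilde\theta$ and $\sum_i(H_i^t)^2\tilde\theta$ with $\widetilde{\nabla^t_{U_te_i}\theta}$ and $\widetilde{\Delta^t\theta}$ via the standard intertwining (the paper cites \cite[Proposition~2.2.1 and p.~193]{Hsu}), and compute $V_{pq}\tilde\theta$ by differentiating along a curve $A(\varepsilon)$ in $GL_n$ with $A'(0)=E_{pq}$ to match the definition of $\G^t$ in (\ref{e1-1}). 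The only cosmetic difference is that the paper writes the It\^o form (\ref{p1.2}) directly rather than passing through Stratonovich, and it does not invoke the symmetry of $G_{pq}$ (the identification $\tfrac12\sum_{p,q}G_{pq}V_{pq}\tilde\theta=\G^t\theta$ holds term-by-term from (\ref{e1-1}) and (\ref{add-3}) without it).
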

\begin{proof}
From the definition explained above, we know that
$\|_t^{-1}\theta(X_t)=\tilde \theta(U_t)$. Then applying the It\^{o}
formula to $\tilde \theta(U_t)$,  we can obtain that for every $0\le
t <\tau$,
\begin{equation}\label{p1.2}
\begin{split}
& \vd \big(\|_t^{-1}\theta(X_t)\big)=\sum_{i=1}^n \sqrt{2} H_i^t
\tilde \theta(U_t)\vd W_t^i +\Big(\sum_{i=1}^n H_i^tH_i^t \tilde
\theta(U_t)-\frac{1}{2} \sum_{i,j=1}^n G_{ij}(t,U_t)V_{ij}\tilde
\theta(U_t) \Big)\vd t.
\end{split}
\end{equation}
Since $H_i^t(U_t)=\mathbf{H}^t(U_te_i)$ and $U_t \in (\O, g_t)$, by
\cite[Proposition 2.2.1]{Hsu}, we have
\begin{equation} \label{add-1}
H_i^t \tilde \theta(U_t)=(\widetilde{\nabla^t_{U_te_i}\theta})(U_t)
=\|_t^{-1}\nabla^t_{U_te_i}\theta(X_t)
\end{equation}
and from \cite[Page 193]{Hsu}, we know that
\begin{equation} \label{add-2}
\sum_{i=1}^n H_i^t H_i^t \tilde \theta^t(U_t)=(\widetilde{\Delta^t
\theta})(U_t) =\|_t^{-1}\Delta^t \theta(X_t).
\end{equation}
On the other hand, note that $V_{pq}(U_t):=Tl_{U_t}(E_{pq})$, so we
have
\begin{align}\label{add-3}
V_{pq}\tilde \theta(U_t)&=\frac{\vd}{\vd\varepsilon}\big|_{\varepsilon=0}\tilde \theta(U_tA(\varepsilon))\nonumber\\
&=\sum_{l=1}^k\sum_{j_l=p}\theta_{j_1\dots j_k}^{i_1\dots
i_m}(U_t)e_{i_1} \otimes \dots \otimes e_{i_{m}}
\otimes e^{j_1} \otimes \dots \otimes \underbrace{e^{q}}_{l\ {\rm th}} \dots \otimes e^{j_k}\nonumber\\
&-\sum_{l=1}^m\sum_{i_l=q}\theta_{j_1\dots j_k}^{i_1\dots
i_m}(U_t)e_{i_1} \otimes \dots \otimes \underbrace{e_{p}}_{l\ {\rm
th}} \dots\otimes e_{i_{m}} \otimes e^{j_1} \otimes \dots \otimes
e^{j_k},
\end{align}
where $A:[-1,1]\rightarrow GL_{n}$ satisfies $A(0)=\mathbf{I}$,
$\frac{\vd}{\vd\varepsilon}\big|_{\varepsilon=0}A(\varepsilon)=
E_{ij}$, with $\mathbf{I}$ being the identity $n \times n$ matrix.

Therefore, putting \eqref{add-1}, \eqref{add-2} and \eqref{add-3}
into  \eqref{p1.2},  we can show (\ref{p1.1}).
\end{proof}

\section{Derivative Formula}
\renewcommand{\thesection}{\arabic{section}}
\renewcommand{\theequation}{\thesection.\arabic{equation}}
\setcounter{equation}{0} \setcounter{maintheorem}{0}

In this section, we will consider general derivative formula for
a large class of tensor fields satisfying some (non-linear)
parabolic equations. From now on, we assume that $a_t \in
\Gamma(T^{m,k}M)$, $t \in [0,T_c)$ for some non-negative integers
$m,k$. Moreover, for a time changing metric $\{g_t\}_{0\le t <T_c}$,
$a_t$ and $\nabla^t a_t \in \Gamma(T^{m,k+1}M)$ satisfy the
following equations
\begin{equation}\label{e3-1}
\begin{split}
& \frac{\partial}{\partial t}a_{t}=-\Delta^t a_{t}+ F_t a_t,\\
& \frac{\partial}{\partial t}\nabla^t a_t=-\Delta^t(\nabla^t a_t)+
\hat F_t \nabla^t a_t,
\end{split}
\end{equation}
where $F_t \in \Gamma({\rm Hom}(T^{m,k}M;T^{m,k}M))$, and $\hat F_t
\in \Gamma({\rm Hom}(T^{m,k+1}M;T^{m,k+1}M))$.

As before, in this section, let $X_t$, $U_t$ be the Brownian motion
and the stochastic horizontal lift corresponding to the
time-changing metric $\{g_t\}_{0\leq t<T_c}$ with the initial point
$X_0=x_0$, $U_0=u_0$ and the maximal time $\zeta\le T_c$ respectively. Let $\{Q_{t}\}_{0\leq
t<\zeta}\in {\rm Hom}(T^{m,k}_{x_0}M;T^{m,k}_{x_0}M)$ and
$\{\hat{Q}_{t}\}_{0\leq t<\zeta}\in {\rm
Hom}(T^{m,k+1}_{x_0}M;T^{m,k+1}_{x_0}M)$ be the solutions to the
following ordinary differential equations
\begin{align}\label{Q1}
\frac{\vd }{\vd t}Q_{t}=-Q_{t}\big(
(F_t)_{\|_t}-\mathcal{G}^t_{\|_t}\big) \ \ \ \mbox{and}\ \
Q_{0}={\rm id},
\end{align}
and
\begin{align}\label{Q2}
\frac{\vd }{\vd t}\hat{Q}_{t}=-\hat{Q}_{t}\big((\hat
F_t)_{\|_t}-\mathcal{G}^t_{\|_t} \big) \ \ \ \mbox{and}\ \
\hat{Q}_{0}={\rm id},
\end{align}
respectively, where $\mathcal{G}^t_{\|_t}, (F_t)_{\|_t} \in {\rm
Hom}$ $(T^{m,k}_{x_0}M;T^{m,k}_{x_0}M)$ such that
\begin{equation*}
\mathcal{G}^t_{\|_t}:=\mathcal{G}^t\|_{t}, \ \
(F_t)_{\|_t}:=\|_t^{-1}F_t\|_t
\end{equation*}
with $\mathcal{G}^t$ defined by (\ref{e1-1}), and all the items in
(\ref{Q2}) similarly defined.

Inspired by \cite[Proposition 3.2]{Driver}, we can derive the
following theorem on a local martingale argument, which will be
applied to gradient estimates for some geometric flows in the
next section.
\begin{theorem}\label{T1}
Suppose  that $\{a_{t}\}_{0\leq t< T_c}$ satisfies  the equation
\eqref{e3-1}. For $t\in [0, \zeta)$, we define
\begin{equation}\label{T1-1}
 N_{t}:=Q_t\|_t^{-1}a_{t}(X_t),\ \ \   \hat{N}_t:=\hat{Q}_{t}\|_t^{-1}\nabla^t a_{t}(X_t).
\end{equation}
Suppose that $l_t$ and $\hat l_t$ are some $T_{x_0}^{m,k}M$ and
$T^{m,k+1}_{x_0}M$ valued continuous semi-martingales respectively,
which have the following forms
\begin{equation}\label{T1-2}
\begin{split}
& \vd l_t=\sum_{i=1}^n\sqrt{2}\alpha_t^i \vd W_t^i,\\
& \vd\hat l_t=
 \hat \beta_t
\vd t.
\end{split}
\end{equation}
Here $\alpha^i_{\cdot}$, $1\le i \le n$ are
adapted $T^{m,k}_{x_0}M$ valued processes,
$\hat \beta_{\cdot}$ is an adapted
$T^{m,k+1}_{x_0}M$ valued processes.
We also assume
 \begin{equation}\label{T1-3}
\begin{split}
& 2 Q_t^{\mathrm{tr}} \alpha_t
=\hat Q_t^{\mathrm{tr}} \hat \beta_t,
\end{split}
\end{equation}
where we view $\alpha_t:=(\alpha_t^1,\dots,\alpha_t^n)$ with
$\alpha_t(u_0e_i)=\alpha_t^i$ as an adapted $T^{m,k+1}_{x_0}M$
valued process, $Q_t^{\mathrm{tr}}
\alpha_t:=(Q_t^{\mathrm{tr}}\alpha_t^1,\dots,
Q_t^{\mathrm{tr}}\alpha_t^n)$, and $Q_t^{\mathrm{tr}}$, $\hat
Q_t^{\mathrm{tr}}$ denote the transposes of $Q_t$ and $\hat Q_t$
respectively.  Then $Z_t:=\big\langle \hat N_t, \hat l_t\big\rangle$
$-\big\langle N_t, l_t\big\rangle$ ($t \in [0,\zeta)$) is a local martingale.
\end{theorem}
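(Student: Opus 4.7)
The plan is to show that $N_t$ and $\hat N_t$ are \emph{themselves} local martingales, and then that the two non-martingale contributions to $dZ_t$ (the drift of $\langle\hat N_t,\hat l_t\rangle$ and the cross-variation $d\langle N,l\rangle_t$) cancel precisely because of the compatibility condition \eqref{T1-3}.

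First I would apply the It\^o formula of Proposition \ref{p1} to the two time-dependent sections $a_t$ and $\nabla^t a_t$. Since Proposition \ref{p1} is stated for fixed sections, the only extra ingredient is the partial time derivative: for a time-dependent $\theta_t$ one picks up an additional $\|_t^{-1}(\partial_t\theta_t)(X_t)\,\vd t$. Substituting the equations \eqref{e3-1} then cancels the two $\Delta^t$ contributions, leaving
\begin{equation*}
\vd\bigl(\|_t^{-1}a_t(X_t)\bigr)=\sqrt{2}\sum_{i=1}^n\|_t^{-1}\nabla^t_{U_te_i}a_t(X_t)\,\vd W_t^i+\bigl[\|_t^{-1}F_ta_t(X_t)-\mathcal{G}^t(U_t)a_t(X_t)\bigr]\vd t,
\end{equation*}
and analogously for $\|_t^{-1}\nabla^t a_t(X_t)$ with $\hat F_t$ in place of $F_t$. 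Under the isometric identification $U_0=u_0$, the bracket equals $\bigl((F_t)_{\|_t}-\mathcal{G}^t_{\|_t}\bigr)\|_t^{-1}a_t(X_t)$, and inserting the ODEs \eqref{Q1}, \eqref{Q2} into $N_t=Q_t\|_t^{-1}a_t(X_t)$ and $\hat N_t=\hat Q_t\|_t^{-1}\nabla^t a_t(X_t)$ the drifts cancel exactly. So both $N_t$ and $\hat N_t$ are local martingales with
\begin{equation*}
\vd N_t=\sqrt{2}\sum_{i=1}^n Q_t\|_t^{-1}\nabla^t_{U_te_i}a_t(X_t)\,\vd W_t^i,\qquad \vd\hat N_t=\sqrt{2}\sum_{i=1}^n\hat Q_t\|_t^{-1}\nabla^t_{U_te_i}(\nabla^ta_t)(X_t)\,\vd W_t^i.
\end{equation*}

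Next I would compute $\vd Z_t$ by the product rule. Since $\hat l_t$ has zero martingale part, $\langle\hat N,\hat l\rangle$ has no quadratic covariation term, so
\begin{equation*}
\vd\langle\hat N_t,\hat l_t\rangle=\text{(local martingale)}+\langle\hat N_t,\hat\beta_t\rangle\vd t,
\end{equation*}
while for $\langle N_t,l_t\rangle$ both factors contribute martingale parts and one gets an extra cross-variation:
\begin{equation*}
\vd\langle N_t,l_t\rangle=\text{(local martingale)}+\vd\langle N,l\rangle_t.
\end{equation*}
Thus showing $Z_t$ is a local martingale reduces to the identity $\vd\langle N,l\rangle_t=\langle\hat N_t,\hat\beta_t\rangle\vd t$.

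For this last identity, which I expect to be the main technical step, I would unwind the bracket using the expressions for $\vd N_t$ and $\vd l_t=\sqrt{2}\sum_i\alpha_t^i\vd W_t^i$:
\begin{equation*}
\vd\langle N,l\rangle_t=2\sum_{i=1}^n\bigl\langle Q_t\|_t^{-1}\nabla^t_{U_te_i}a_t(X_t),\alpha_t^i\bigr\rangle\vd t=2\bigl\langle\|_t^{-1}\nabla^t a_t(X_t),\,Q_t^{\mathrm{tr}}\alpha_t\bigr\rangle\vd t,
\end{equation*}
where the second equality uses the convention that $\alpha_t$ acts on $T_{x_0}M$ by $\alpha_t(u_0e_i)=\alpha_t^i$, viewed as an element of $T^{m,k+1}_{x_0}M$, so that the natural inner product is $\sum_i\langle\cdot(e_i),\cdot(e_i)\rangle$. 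Applying the compatibility condition \eqref{T1-3} converts $2Q_t^{\mathrm{tr}}\alpha_t$ into $\hat Q_t^{\mathrm{tr}}\hat\beta_t$, and adjointness gives $\langle\|_t^{-1}\nabla^ta_t(X_t),\hat Q_t^{\mathrm{tr}}\hat\beta_t\rangle=\langle\hat N_t,\hat\beta_t\rangle$, completing the cancellation. The delicate points will be (i) keeping track of the scalarisation/pullback identifications so that the pairing of $\|_t^{-1}\nabla^t a_t(X_t)\in T^{m,k+1}_{x_0}M$ with $\alpha_t\in T^{m,k+1}_{x_0}M$ is expressed correctly as a sum over $i$, and (ii) verifying that $\mathcal{G}^t(U_t)\,a_t(X_t)$ and $\mathcal{G}^t_{\|_t}\|_t^{-1}a_t(X_t)$ agree under the identification $U_0=u_0$, which follows directly from the definitions \eqref{e1-1} and the scalarisation formula.
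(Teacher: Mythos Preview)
Your proposal is correct and follows essentially the same route as the paper's proof: first establish that $N_t$ and $\hat N_t$ are local martingales by combining the (time-dependent) It\^o formula of Proposition~\ref{p1} with the evolution equations \eqref{e3-1} and the ODEs \eqref{Q1}, \eqref{Q2}, and then compute $\vd Z_t$ by the product rule, observing that the drift $\langle\hat N_t,\hat\beta_t\rangle\vd t$ and the cross-variation $\vd\langle N,l\rangle_t$ cancel precisely thanks to \eqref{T1-3}. The two technical subtleties you flag (the pairing convention for $\alpha_t$ as a $T^{m,k+1}_{x_0}M$-valued process, and the agreement of $\mathcal{G}^t(U_t)a_t(X_t)$ with $\mathcal{G}^t_{\|_t}\|_t^{-1}a_t(X_t)$) are the right ones to check and are handled implicitly in the paper.
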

\begin{proof}
By the It\^{o} formula in Proposition \ref{p1} and  equations
(\ref{e3-1}), (\ref{Q1}),
\begin{equation}\label{T1-4}
\begin{split}
\vd N_t&=\sqrt{2}Q_t\|_t^{-1}\nabla^t_{U_t \vd W_t}a_t(X_t)+
Q_t\big(\|_t^{-1}\Delta^t a_t(X_t)-\G^t(U_t)a_t(X_t)\big)\vd t\\
&-Q_t\|_t^{-1}\big(\Delta^t a_t(X_t)-F_t(X_t)a_t(X_t)\big)\vd t
-Q_t\big(\|_t^{-1}F_t(X_t)a_t(X_t)-\G^t(U_t)a_t(X_t)\big)\vd t\\
&=\sqrt{2}Q_t\|_t^{-1}\nabla^t_{U_t \vd W_t}a_t(X_t).
\end{split}
\end{equation}
Similarly, we have
\begin{equation}\label{T1-5}
\vd \hat{N}_t=\sqrt{2} \hat Q_t\|_t^{-1}\nabla^t_{U_t \vd
W_t}\big(\nabla^t a_t(X_t)\big).
\end{equation}
We write $\vd X_t\simeq \vd Y_t$ if $X_t-Y_t$ is a local martingale.
Then from (\ref{T1-4}) and (\ref{T1-5}), we can obtain
\begin{equation*}
\begin{split}
\vd Z_t & =\langle \vd \hat N_t, \hat l_t\rangle+\langle \hat N_t,
\vd \hat l_t\rangle-
\langle \vd N_t, l_t\rangle- \langle N_t, \vd l_t\rangle-\langle \vd N_t, \vd l_t\rangle\\
&\simeq \langle \hat{Q}_{t}\|_t^{-1}\nabla^t a_{t}(X_t), \hat
\beta_t \rangle \vd t-2\sum_{i=1}^n
\langle Q_t\|_t^{-1}\nabla^t_{U_t e_i}a_t(X_t), \alpha_t^i\rangle \vd t\\
&=\langle \|_t^{-1}\nabla^t a_{t}(X_t), \hat Q_t^{\mathrm{tr}}\hat
\beta_t \rangle \vd t- \langle \|_t^{-1}\nabla^t a_t(X_t),
2Q_t^{\mathrm{tr}}\alpha_t\rangle \vd t.
\end{split}
\end{equation*}
Therefore, substituting  the condition (\ref{T1-3}) into the above
equality, we can show immediately that $Z_t$ is a local martingale.
\end{proof}

Based on Theorem \ref{T1}, we get the following result.
\begin{theorem}\label{c1}
Given a fixed $T<T_c$ and an open domain $U \subseteq M$ such that
$x_0 \in U$, let $\tau$ be a stopping time such that $X_t \in U$ as
long as $t<\tau$. Suppose that $h_{\cdot}$ is an adapted
$T^{m,k+1}_{x_0}M$ valued process such that $h_0=v$ for some
non-random $v \in T^{m,k+1}_{x_0}M$, $h_{T\wedge\tau}=0$, and $t
\rightarrow h_t$ is absolutely continuous with
$\int_0^{T}|\dot{h}_t|^2 \vd t<\infty$, where $\dot{h}_t:=\frac{\vd
h_t}{\vd t}$. Suppose that $a_t$ satisfies the equation \eqref{e3-1}
on $[0,T]\times U$, and we define a $T^{m,k}_{x_0}M$ valued process
$l^h$ as follows
\begin{equation}\label{c1-1}
l^h_t:=\frac{\sqrt{2}}{2}\int_0^t (Q_s^{\mathrm{tr}})^{-1}\hat
Q_s^{\mathrm{tr}}\dot{h}_s \vd W_s,\ \ 0\le t \le T \wedge \tau,
\end{equation}
where $Q_t$, $\hat Q_t$ are defined by \eqref{Q1} and \eqref{Q2}
respectively. Moreover, if we assume
\begin{equation}\label{c1-2}
\begin{split}
&\sup_{t \in [0,T]}\E\Big[\big|\langle \hat N_{t\wedge \tau},h_{t
\wedge \tau} \rangle\big|^{1+\delta}\Big]<\infty, \ \ \quad \sup_{t
\in [0,T]}\E\Big[\big|\langle N_{t \wedge \tau}, l^h_{t \wedge
\tau}\rangle\big|^{1+\delta}\Big]<\infty
\end{split}
\end{equation}
for some $\delta>0$, where $N_t$, $\hat N_t$ are defined by
\eqref{T1-1}, then we have
\begin{equation}\label{c1-3}
\langle \nabla^0 a_0(x), v \rangle=-\E\Big[\langle N_{T \wedge
\tau}, l_{T \wedge \tau}^h\rangle\Big].
\end{equation}
\end{theorem}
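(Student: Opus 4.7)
The plan is to derive \eqref{c1-3} from the local-martingale identity of Theorem~\ref{T1} by choosing the free semi-martingales $l$ and $\hat l$ so that the compatibility condition~\eqref{T1-3} holds automatically and the endpoint values of the resulting local martingale match, respectively, the left-hand and right-hand sides of~\eqref{c1-3}. A standard localization argument, powered by the $L^{1+\delta}$ moment bound~\eqref{c1-2}, will then upgrade the local martingale to a true martingale so that one may take expectations at $t = T \wedge \tau$.

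Specifically, I would take $\hat l_t := h_t$, which is of bounded variation with drift $\hat\beta_t = \dot h_t$. The compatibility condition~\eqref{T1-3} then forces $\alpha_t = \tfrac{1}{2}(Q_t^{\mathrm{tr}})^{-1}\hat Q_t^{\mathrm{tr}}\dot h_t$ (viewed as a $T^{m,k+1}_{x_0}M$-valued process in the sense prescribed after \eqref{T1-3}), and the associated diffusion $l_t = \sqrt{2}\int_0^t \alpha_s\,\vd W_s$ is exactly the process $l^h_t$ defined by~\eqref{c1-1}. Theorem~\ref{T1} thus yields that
\[
Z_t \;:=\; \big\langle \hat N_t, h_t\big\rangle \,-\, \big\langle N_t, l^h_t\big\rangle
\]
is a continuous local martingale on the random time interval $[0, T\wedge\tau]$, which lies inside $[0,\zeta)$ because $T<T_c$ and $X_t\in U$ while $t<\tau$.

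Next I would read off the endpoint values of $Z$. At $t=0$, the initial data $Q_0 = \hat Q_0 = \mathrm{id}$, $\|_0 = \mathrm{id}$, $X_0 = x_0$, $h_0 = v$ and $l^h_0 = 0$ give $Z_0 = \langle \nabla^0 a_0(x_0), v\rangle$, i.e.\ the left-hand side of~\eqref{c1-3}. At $t = T\wedge\tau$, the prescribed terminal condition $h_{T\wedge\tau} = 0$ annihilates $\langle \hat N_{T\wedge\tau}, h_{T\wedge\tau}\rangle$, leaving $Z_{T\wedge\tau} = -\langle N_{T\wedge\tau}, l^h_{T\wedge\tau}\rangle$. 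Thus, as soon as we know $\E[Z_0] = \E[Z_{T\wedge\tau}]$, the identity \eqref{c1-3} drops out.

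The main obstacle is the last step: promoting the local martingale to one whose expectation at $T\wedge\tau$ equals its initial value. I would pick a localizing sequence of stopping times $\tau_n \uparrow T\wedge\tau$ along which $Z^{\tau_n}$ is a true martingale, so that $Z_0 = \E[Z_{T\wedge\tau_n}]$ for each $n$. Almost-sure convergence $Z_{T\wedge\tau_n} \to Z_{T\wedge\tau}$ is automatic by path-continuity, and the $L^{1+\delta}$ bound~\eqref{c1-2} applied to both summands of $Z_{t\wedge\tau}$, combined with Jensen's inequality on the martingales $Z^{\tau_n}$, supplies $\sup_n \E[|Z_{T\wedge\tau_n}|^{1+\delta}] < \infty$ and hence uniform integrability of the family $\{Z_{T\wedge\tau_n}\}_n$. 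Passing to the limit then yields $Z_0 = \E[Z_{T\wedge\tau}]$ and so \eqref{c1-3}.
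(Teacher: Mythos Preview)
Your proposal is correct and follows essentially the same route as the paper: choose $\hat l=h$ and $l=l^h$ so that \eqref{T1-3} holds, invoke (the proof of) Theorem~\ref{T1} on $[0,T\wedge\tau]$ to see that $Z_t=\langle\hat N_t,h_t\rangle-\langle N_t,l^h_t\rangle$ is a local martingale, then use the $L^{1+\delta}$ bound \eqref{c1-2} to pass from local to true martingale and equate $\E[Z_0]$ with $\E[Z_{T\wedge\tau}]$. The paper states the uniform-integrability upgrade in one line, whereas you spell out a localization argument; both amount to the same standard step.
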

\begin{proof}
Let $Z_t:=\langle \hat N_t, h_t\rangle-\langle N_t, l_t^h\rangle$, $t \in [0, T \wedge \tau]$.
Note that (\ref{c1-1}) implies that the condition (\ref{T1-3}) in
Theorem \ref{T1} is true for the processes $h$ and $l^h$.  Although
we only assume $a_t$ satisfies the equation (\ref{e3-1}) on
$[0,T]\times U$, following the same steps in the proof of Theorem
\ref{T1}, we still know that $Z_{t\wedge \tau}$ is a local
martingale since $X_t \in U$ as long as $t<\tau$. Note that the
uniformly integrable condition (\ref{c1-2}) ensures that $Z_{t\wedge
\tau}$ is a real martingale, we have $\E \big[Z_0]=\E \big[Z_{T
\wedge \tau}\big]$, which is (\ref{c1-3}).
\end{proof}

\section{Application to geometric flows}
\renewcommand{\thesection}{\arabic{section}}
\renewcommand{\theequation}{\thesection.\arabic{equation}}
\setcounter{equation}{0} \setcounter{maintheorem}{0}

 In this section, we will apply Theorem \ref{c1} to get some (local)
gradient estimates for various geometric flows.
\subsection{Local gradient estimate for the  Riemannian curvature tensor of the Ricci flow}
We consider the following Ricci flow about the metric $g_t$ on an
$n$-dimensional manifold $M$ ($n \ge 2$)
\begin{equation}\label{e4-1}
\begin{cases}
&\frac{\partial }{\partial t}g_t(x)=-2\text{Ric}_t(x),\ \ 0 \le t <T_c, \\
&g_0(x)=\mathring{g}(x),
\end{cases}
\end{equation}
where $\text{Ric}_t$ is the Ricci curvature tensor associated with
the metric $g_t$, and $\mathring{g}$ is the initial metric for the
flow.

The Ricci flow  was firstly introduced in \cite{Ham} by R. S.
Hamilton, where the local existence of (\ref{e4-1}) was proved when
$M$ is compact. We also refer readers to \cite{BLN,Top} for an
overall introduction of the Ricci flow. If $M$ is non-compact and
the initial metric $\mathring{g}$ is complete with bounded sectional
Riemannian curvature, the local existence of (\ref{e4-1}) was
established in \cite{Shi}. In this subsection, we will apply Theorem
\ref{c1} to give a stochastic proof for the local gradient estimate
obtained in \cite{Shi}.

Throughout this subsection, we assume that the equation (\ref{e4-1})
holds on $[0,T]\times U$ for some constant $0<T<T_c$ and open domain
$U \subseteq M$, and $(M, g_t)$ is complete for every $t \in
[0,T_c)$, but with no requirement that $M$ is compact.  Then by
\cite[Theorem 13.2]{Ham} (see also \cite[Page 49 (3.3.2)]{Top} and
\cite[Page 50 (3.3.3)]{Top}), we know that on $[0,T]\times U$, the
evolution equations
\begin{equation}\label{e4-2}
\begin{split}
& \frac{\partial }{\partial t}{\rm Rm}_t=\Delta^t {\rm Rm}_t+ {\rm Rm}_t*{\rm Rm}_t,\\
& \frac{\partial }{\partial t}\nabla^t{\rm Rm}_t= \Delta^t \nabla^t
{\rm Rm}_t+ {\rm Rm}_t*\nabla^t{\rm Rm}_t
\end{split}
\end{equation}
hold, where ${\rm Rm}_t$ denotes the Riemannian curvature tensor
with respect to $g_t$, and $*$ is a contraction operator on tensors
such that $|A*B|_t \le C(n) |A|_t |B|_t$ for every tensor $A, B$
with $t \in [0,T]$, where $|\cdot|_t$ denotes the norm for
tensors induced by the metric $g_t$, and $C(n)$ is some constant
independent of $A$, $B$, $t$ (see also \cite[Page 26]{Top}).

In this subsection, we define $a_t:={\rm Rm}_{T-t}$ for every $t \in
[0,T]$. Then $a_t \in \Gamma(T^{1,3}M)$. By (\ref{e4-2}) we can
easily check that $a_t$ satisfies the equation (\ref{e3-1}) with
respect to the time changing metric $\{\tilde g_t:=g_{T-t}\}_{t \in
[0,T]}$, and the maps $F_t$, $\hat F_t$ satisfy $F_t \theta=-{\rm
Rm}_{T-t}*\theta$, $\hat F_t \hat \theta=-{\rm Rm}_{T-t}*\hat
\theta$ for every $\theta \in \Gamma(T^{1,3}M)$ and $\hat \theta \in
\Gamma(T^{1,4}M)$ with $*$ being the contraction operator in the
first equation and the second equation of (\ref{e4-2}) respectively.

Inspired by the methods in \cite[Theorem 4.1]{TW} and
\cite[Corollary 5.1]{TW} (see also \cite{cheng}), together with
Theorem \ref{c1},  we can construct a suitable process $h$ to get
the following local gradient estimate which has been shown by W. X.
Shi \cite{Shi} via some analytic method.

\emph {From now on, the constant $C$ in the conclusions of
theorems will be indexed by all the parameters which it depends on,
and the constant $C$ in the proofs of theorems may change in
different lines but only depends on $n$.}

\begin{theorem}[\cite{Shi}]\label{T4-1}
Suppose that for some constants $K>0$, $K_1>0$, $|{\rm Rm}_t(x)|_t
\le K$, $|{\rm Ric}_t(x)|_t\le K_1$ for every $(t,x) \in [0,T]\times
U$, and there exist $x_0 \in M$, $r>0$ such that
$\overline{B_{g_t}(x_0,r)}\subseteq U$ for every $t \in [0,T]$,
where $B_{g_t}(x_0,r)$ denotes the $g_t$-geodesic ball with center
$x_0\in M$ and radius $r$. Then we have
\begin{equation}\label{T4-1-0}
\left|\nabla^T {\rm Rm}_T(x_0)\right|_{T}^2\le C(n){\rm
e}^{C(n)KT}K^2\left(K_1+r^{-2}\right)\left(1-{\rm
e}^{-C(n)(K_1+r^{-2})T}\right)^{-1}
\end{equation}
for some positive constant $C(n)$.
\end{theorem}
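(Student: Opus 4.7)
The plan is to apply Theorem~\ref{c1} to the tensor $a_t := \mathrm{Rm}_{T-t}$ viewed under the time-reversed flow $\tilde g_t := g_{T-t}$. Under this reversal the evolution system \eqref{e4-2} coincides with \eqref{e3-1} for $F_t\theta = -\mathrm{Rm}_{T-t}\ast\theta$ and $\hat F_t\hat\theta = -\mathrm{Rm}_{T-t}\ast\hat\theta$, while $\partial_t\tilde g_t = 2\Ric_{T-t}$ gives $|\G^t|_{\mathrm{op}} \le C(n)K_1 \le C(n)K$ using the dimensional bound $|\Ric|\le C(n)|\mathrm{Rm}|$. Identifying $\nabla^0 a_0(x_0) = \nabla^T\mathrm{Rm}_T(x_0)$, the task is to estimate $|\langle \nabla^0 a_0(x_0), v\rangle|$ for a unit $v\in T^{1,4}_{x_0}M$. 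The natural localisation is $\tau := \inf\{t\ge 0 : X_t\notin B_{\tilde g_t}(x_0, r)\}$ and $\sigma := T\wedge \tau$; the hypothesis $\overline{B_{g_t}(x_0, r)}\subseteq U$ then forces \eqref{e4-2} to hold along the $\tilde g$-Brownian path on $[0, \sigma)$, while the completeness of $(M, g_t)$ together with the curvature bound ensures $\sigma < \zeta$.

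Two routine ingredients feed into \eqref{c1-3}. First, Gr\"onwall applied to \eqref{Q1}--\eqref{Q2} with $|F_t|_{\mathrm{op}}, |\hat F_t|_{\mathrm{op}}\le C(n)K$ and $|\G^t|_{\mathrm{op}}\le C(n)K$ yields
\[
\max\bigl\{|Q_t|_{\mathrm{op}}, |\hat Q_t|_{\mathrm{op}}, |Q_t^{-1}|_{\mathrm{op}}, |\hat Q_t^{-1}|_{\mathrm{op}}\bigr\} \le \e^{C(n)KT}\quad\text{on } [0,\sigma],
\]
and consequently $|N_t|\le \e^{C(n)KT}K$ there. Second, for any admissible $h$ the It\^o isometry applied to \eqref{c1-1}, combined with these $Q$-bounds, gives $\E|l^h_\sigma|^2 \le \e^{C(n)KT}\,\E\int_0^\sigma |\dot h_s|^2\,\vd s$. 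Cauchy--Schwarz applied to \eqref{c1-3} therefore reduces the theorem to producing an adapted, absolutely continuous $h$ with $h_0 = v$, $h_\sigma = 0$, and
\[
\E\int_0^\sigma |\dot h_s|^2\,\vd s \le C(n)\,\frac{K_1 + r^{-2}}{1 - \e^{-C(n)(K_1 + r^{-2})T}}.
\]

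For the construction of $h$ I would follow the Thalmaier--Wang recipe, taking $h_t = k_t v$ where the scalar $k_t$ combines an exponential temporal weight at rate $\lambda := C(n)(K_1 + r^{-2})$ with a radial cut-off that sends $k_t$ to zero as $X_t$ approaches $\partial B_{\tilde g_t}(x_0, r)$. A concrete realisation is to let $k_t$ solve an adapted scalar ODE $\dot k_t = -\mu_t k_t$ with $k_0 = 1$ and $\mu_t := \lambda + \nu(X_t)$, where $\nu\ge 0$ is smooth, supported in $B_{\tilde g_t}(x_0, r)$, and blows up at the boundary fast enough to force $\int_0^\sigma \mu_s\,\vd s = +\infty$ on $\{\sigma = \tau<T\}$ (so that $k_\sigma = 0$) while keeping $\E\int_0^\sigma \mu_s^2\,\vd s$ at the target order. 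The integrability hypothesis \eqref{c1-2} is then automatic from the sup bounds on $Q, \hat Q$ and the $K$-bound on $|\mathrm{Rm}|$. The main obstacle is the calibration of $\nu$: the three requirements --- absolute continuity of $h$, honest vanishing at the random time $\sigma$, and the sharp $L^2$-budget for $\dot h$ --- pull against each other, and the specific scale $\lambda \sim K_1 + r^{-2}$ is the unique choice that balances the operator-norm growth of $Q, \hat Q$ (contributing $K_1\lesssim K$) against the parabolic scale $r^{-2}$ of the ball, producing the factor $(K_1+r^{-2})/(1-\e^{-C(n)(K_1+r^{-2})T})$ in \eqref{T4-1-0}.
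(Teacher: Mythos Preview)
Your overall strategy matches the paper's: reverse time, apply Theorem~\ref{c1}, control $Q_t,\hat Q_t$ by Gr\"onwall, and reduce everything to an $L^2$-budget for $\dot h$. The genuine gap is in the construction of $h$. With $k_t=\exp\bigl(-\int_0^t(\lambda+\nu(X_s))\,\vd s\bigr)$ you do force $k_\sigma=0$ on $\{\sigma=\tau<T\}$, but on the complementary event $\{\sigma=T\le\tau\}$ the integrand stays locally bounded and $k_T>0$, so the hypothesis $h_{T\wedge\tau}=0$ of Theorem~\ref{c1} fails there. No smooth $\nu$ that blows up only at $\partial B$ can repair this: an exponential ODE with locally integrable rate never reaches zero in finite time. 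The actual Thalmaier--Wang device is a \emph{time change}, not a blow-up coefficient. One takes a cutoff $\bar f\in C_b^\infty(\R_+)$ with $\bar f(0)=1$, $\bar f\equiv 0$ on $[r,\infty)$, $-Cs r^{-2}\le\bar f'(s)\le 0$, $|\bar f''|\le Cr^{-2}$, sets $f(t,x)=\bar f\bigl(\rho(T-t,x)\bigr)$ and $\Lambda(t)=\int_0^t f^{-2}(s,X_s)\,\vd s$ with inverse $\tau(\cdot)$. Since $f^{-2}\ge 1$ one has $\tau(T)\le T$ always, and since $\Lambda$ explodes before $X$ leaves the ball, $X_t\in\overline{B_{g_{T-t}}(x_0,r)}$ for all $t<\tau(T)$. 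Now choose a \emph{deterministic} $\hat h\in C^1([0,T])$ with $\hat h(0)=v$, $\hat h(T)=0$ and set $\tilde h_t:=\hat h\bigl(\Lambda(t)\wedge T\bigr)$; then $\tilde h_{\tau(T)}=\hat h(T)=0$ on every path, and $\tau(T)$ (not your $\sigma$) is the stopping time fed into Theorem~\ref{c1}.

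This reformulation also supplies the quantitative step you leave as an assertion, namely the origin of the scale $K_1+r^{-2}$. After the substitution $t\mapsto\tau(t)$ in the It\^o isometry for $l^{\tilde h}$, the problem reduces to bounding $\E\bigl[f^{-2}(\tau(t),X_{\tau(t)})\bigr]$. Apply It\^o to $f^{-2}(t,X_t)$: the drift contains $\bigl(\Delta^{(T-t)}-\partial_t\bigr)\rho(T-t,\cdot)$, and the Laplacian comparison theorem under $|\Ric_t|\le K_1$, together with $\bigl|\partial_t\rho(T-t,x)\bigr|\le K_1 r$ (integrate $\Ric$ along the minimal geodesic) and the cutoff bounds on $\bar f',\bar f''$, give via Gr\"onwall $\E\bigl[f^{-2}(\tau(t),X_{\tau(t)})\bigr]\le \e^{C(K_1+r^{-2})t}$. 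Taking $\hat h(t)$ proportional to $\int_t^T\e^{-C(K_1+r^{-2})s}\,\vd s$ then makes $\int_0^T|\dot{\hat h}(t)|^2\,\E\bigl[f^{-2}(\tau(t),X_{\tau(t)})\bigr]\,\vd t$ equal, up to a dimensional constant, to the factor $(K_1+r^{-2})\bigl(1-\e^{-C(K_1+r^{-2})T}\bigr)^{-1}$ in \eqref{T4-1-0}. So $K_1+r^{-2}$ is not a balancing scale for the operator norms of $Q,\hat Q$ as you suggest; it is the Gr\"onwall rate forced on $f^{-2}$ by Laplacian comparison and the radius of the ball.
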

\begin{proof}
 Let $X_t^T$, $U_t^T$ be the Brownian motion and
stochastic horizontal lift on $M$ with respect to the time changing
metric $\{\tilde g_t=g_{T-t}\}_{t \in [0,T]}$ whose initial points
are $x_0$ and $u_0 \in (\O, \tilde g_0)$ respectively. Let $Q_t^T$,
$\hat Q_t^T$ be defined by (\ref{Q1}) and (\ref{Q2}) respectively
with $X_t^T$, $U_t^T$ and the operators $F_t$, $\hat F_t$ obtained
above.  By assumption, we can get $|F_t(x)|_t\le CK$, $|\hat
F_t(x)|_t \le CK$ \footnote{For any ${\bf T_t}\in \Gamma({\rm
Hom}(T^{m,k}M,T^{m,k}M))$ and constant $C$, we write $|{\bf
T_t}(x)|_t\le C$ for $x\in M$ if $|T_t\theta(x)|_t\leq
C|\theta(x)|_t$ holds for every $\theta \in T_x^{m,k}M$. We omit the
subscript $0$ when $t=0$.} when $x \in U$, then it is not difficult
to show that
\begin{equation}\label{T4-1-2}
\begin{split}
& |Q_t^T|\le {\rm e}^{CKt},\ |(Q_t^T)^{-1}|\le {\rm e}^{CKt},\ \
|\hat Q_t^T|\le {\rm e}^{CKt},\ \ t \in [0, \tau_U\wedge T],
\end{split}
\end{equation}
where $\tau_U:=\inf\{t>0:\ X_t^T \notin U\}$ is the first exit time
of $U$ for $X_t^T$.

Let $\eta(s):=C{\rm e}^{-\frac{1}{|s-1|^2}}1_{\{|s|<1\}}$ with $C$
being a normalizing constant such that $\int_{\R} \eta(s) \vd s=1$.
We define a function $\bar f_1: \R \rightarrow \R_+$ as follows
\begin{equation*}
\bar f_1(s):=
\begin{cases}
1,\ \ \ \ \ \ \ \ \ \ \ & {\rm if}\ s\le \frac{r}{3},\\
 \frac{r^2-9(s-\frac{r}{3})^2}{r^2},\ \ &{\rm if}\ \frac{r}{3}<s<\frac{2r}{3},\\
 0,\ \ \ \ \ \ \ \ \ \ \ & {\rm if}\ s\ge \frac{2r}{3},
\end{cases}
\end{equation*}
and define $\bar f(s):=\frac{3}{r}\int_{\R} \bar
f_1(s-u)\eta(\frac{3u}{r})\vd u$. It is easy to check that $\bar f
\in C_b^{\infty}(\R),\ \bar f(0)=1,\ \bar f(s)=0$ for every $s \ge
r$, and for every $s \in \R$,
\begin{equation}\label{T4-1-1}
\begin{split}
&  0\le \bar f(s) \le 1,\ -\frac{Cs}{r^2}\le\bar f'(s)\le0 ,\ |\bar
f'(s)|\le \frac{C}{r},\ |\bar f''(s)|\le \frac{C}{r^2}.
\end{split}
\end{equation}
We set $f(t,x):=\bar f(\rho(T-t,x))$ for every $x \in M$, where
$\rho(t,x):=\rho(t,x_0,x)$ is the $g_{t}$-Riemannian distance
between $x_0$ and $x$. Let
\begin{equation*}
\begin{split}
&\Lambda(t):=\int_0^t f^{-2}(s,X_s^T)1_{\{s<\tau_U \wedge T\}}\vd s, \\
&\tau(t):=\inf\{s \ge 0:\ \Lambda(s)\ge t\}\wedge T.
\end{split}
\end{equation*}
By the definition of $\bar f$, we know that $f^{-2}\ge 1$.
Therefore, $\Lambda(t)\ge t$, $\tau(t)\le t$ for every $t \in [0,T]$
and $\Lambda(t) \wedge T=\Lambda(\tau(T))=T$ for every $\tau(T)\le t
\le T$. Moreover, since $\bar f(s)=0$ for every $s \ge r$, by
definition it is easy to check that $\Lambda(t)=\infty$ if there
exists a $s \in (0,t)$ such that $X_s^T \notin
\overline{B_{g_{T-s}}(x_0,r)} $. Thus we have $X_t^T \in
\overline{B_{g_{T-t}}(x_0,r)} \subseteq U$ as long as $t<\tau(T)$.

Let $\hat h \in C^1([0,T]; T^{1,4}\R^n)$ be non-random which will be
determined later such that $\hat h(0)=v$ and $\hat h(T)=0$. Let
$\tilde{h}_t:=\hat h \big(\Lambda(t) \wedge T\big)$. Then
$\tilde{h}_{t}=\hat h(T)=0$ for every $\tau(T)\le t \le T$. Note
that $X_t^T \in U$ as long as $t<\tau(T)$, so we can take $h$ and
$\tau$ to be $\tilde h$ and $\tau(T)$ in Theorem \ref{c1}
respectively (i.e., $h=\tilde{h}$ and $\tau=\tau(T)$) such that
$l^{\tilde h}$ can be defined as in (\ref{c1-1}). Now, we need to
check the condition (\ref{c1-3}). In particular, it suffices to show
\begin{equation*}
\sup_{t \in [0,T]}\E\big[|l^{\tilde h}_t|^2\big]\le
\frac{1}{2}\E\Big[\int_0^T |((Q_t^T)^{\mathrm{tr}})^{-1}(\hat
Q_t^T)^{\mathrm{tr}} \dot{\tilde h}_t|^2 \vd t\Big]<\infty.
\end{equation*}
We have
\begin{equation}\label{T4-1-4}
\begin{split}
& \int_0^T |((Q_t^T)^{\mathrm{tr}})^{-1}(\hat Q_t^T)^{\mathrm{tr}}
\dot{\tilde h}_t|^2 \vd t=\int_0^{\tau(T)}
\left|((Q_t^T)^{\mathrm{tr}})^{-1}(\hat Q_t^T)^{\mathrm{tr}} \dot{\tilde h}_t\right|^2 \vd t\\
&\le \int_0^{\tau(T)}{\rm e}^{CKt} |\dot{\hat
h}\big(\Lambda(t)\big)|^2 f^{-4}(t,X_t^T)\vd t = \int_0^{T}{\rm
e}^{C K\tau(t)} \big|\dot{\hat h}\big(\Lambda(\tau(t))\big)\big|^2
f^{-4}(\tau(t),X_{\tau(t)}^T)\vd (\tau(t))\\
&=\int_0^T {\rm e}^{CK\tau(t)} |\dot{\hat h}(t)|^2
f^{-2}(\tau(t),X_{\tau(t)}^T)\vd t.
\end{split}
\end{equation}
On the other hand, by \cite[Theorem 2]{KP}, for every $t \in [0,T]$,
we have
\begin{equation}\label{T4-1-5}
\begin{split}
 \rho(T-t,X_t^T)=\sqrt{2}B_t+\int_0^t
\left[\Delta^{(T-s)}\rho(T-s,X_s^T)- \frac{\partial
 \rho(T-s,y)}{\partial s}\Big|_{y=X_s^T}\right]1_{\{X_s^T \notin {\{x_0\}\cup\rm
Cut}(T-s,x_0)\}}\vd s -L_t,
\end{split}
\end{equation}
where $B_t$ is a one-dimensional Brownian motion, ${\rm Cut}(t,x_0)$
is the cut-locus of $x_0$ associated with $g_{t}$, and $L_t$ is a
continuous non-decreasing process which only increases when $X_t^T
\in {\rm Cut}(T-t,x_0)$.

For every positive integer $k$, let $\sigma(k):=\inf\{t>0:\
f^{-2}(t,X_t^T)\ge k\}$ and $\tau_k(t):=\tau(t)\wedge \sigma(k)$.
Note that $f(t,X_t^T)=\bar f(\rho(T-t,X_t^T))$. By the It\^{o}
formula and (\ref{T4-1-5}), we have
\begin{equation}\label{T4-1-6}
\begin{split}
&f^{-2}(\tau_k(t),X_{\tau_k(t)}^T)=M_{\tau_k(t)}-
2\int_0^{\tau_k(t)}\bar f'\bar f^{-3}(\rho(T-s,X_s^T))
\mathscr{L}_s \rho(T-s,X_s^T)1_{\{X_s^T \notin \{x_0\}\cup{\rm Cut}(T-s,x_0)\}}\vd s\\
& -2\int_0^{\tau_k(t)}\big(\bar f'' \bar f^{-3}-3(\bar f')^2\bar
f^{-4}\big)(\rho(T-s,X_s^T))\vd s+
\int_0^{\tau_k(t)}2\bar f' \bar f^{-3}(\rho(T-s,X_s^T))\vd L_s\\
&\le M_{\tau_k(t)} -2\int_0^{\tau(t)} \bar f'\bar
f^{-3}(\rho(T-s,X_{s}^T)) \mathscr{L}_{s}
\rho(T-s,X_{s}^T)1_{\{X_{s}^T \notin \{x_0\}\cup{\rm Cut}(T-s,x_0),
s<\sigma(k)\}}
\vd s\\
&-2\int_0^{\tau(t)}\big(\bar f'' \bar f^{-3}-3(\bar f')^2\bar
f^{-4}\big)(\rho(T-s,X_{s}^T))1_{\{s<\sigma(k)\}}\vd s\\
&= -2\int_0^{t} \bar f'\bar f^{-1}(\rho(T-\tau(s),X_{\tau(s)}^T))
\mathscr{L}_{\tau(s)}
\rho(T-\tau(s),X_{\tau(s)}^T)1_{\{X_{\tau(s)}^T \notin
\{x_0\}\cup{\rm Cut}(T-\tau(s),x_0), \tau(s)<\sigma(k)\}}
\vd s\\
&+M_{\tau_k(t)}-2\int_0^{t}\big(\bar f'' \bar f^{-1}-3(\bar
f')^2\bar{f}^{-2}\big)(\rho(T-\tau(s),X_{\tau(s)}^T))1_{\{\tau(s)<\sigma(k)\}}\vd
s \end{split}
\end{equation}
where $M_t:=-2\sqrt{2}\int_0^t \bar f'\bar
f^{-3}(\rho(T-s,X_s^T))\vd B_s$ is a local martingale, and
$\mathscr{L}_t:=\Delta^{(T-t)}-\frac{\partial}{\partial t}$. The
first inequality above is due to the property $\bar f' \bar f^{-3}
\le 0$ and the last step is obtained by the change of variable for
the time parameter.

According to the Laplacian comparison theorem, for every $x \in
\overline{B_{g_{T-t}}(x_0,r)}\subseteq U$ such that $x \notin
\{x_0\}\cup{\rm Cut}(T-t,x_0)$, we can obtain
\begin{equation*}
\Delta^{(T-t)}\rho(T-t,x)\le \sqrt{K_1(n-1)}{\rm
coth}\Big(\sqrt{\frac{K_1}{n-1}}\rho(T-t,x)\Big).
\end{equation*}
Combining this inequality with (\ref{T4-1-1}) and applying the
inequality ${\rm coth}(s) \le 1+s^{-1}$, we get that for every $x
\in \overline{B_{g_{T-t}}(x_0,r)}\subseteq U$ such that $x \notin
\{x_0\}\cup {\rm Cut}(T-t,x_0)$,
\begin{equation*}
-\bar f'(\rho(T-t,x))\bar f
(\rho(T-t,x))\Delta^{(T-t)}\rho(T-t,x)\le
C\left(\frac{\sqrt{K_1}}{r}+\frac{1}{r^2}\right).
\end{equation*}
By \cite[Lemma 5 and Remark 6]{MT} (or \cite[Lemma 4]{KP}), for
every $x \in \overline{B_{g_{T-t}}(x_0,r)}\subseteq U$ such that $x
\notin \{x_0\}\cup {\rm Cut}(T-t,x_0)$,
\begin{equation*}
\begin{split}
& \left|\frac{\partial \rho(T-t,x)}{\partial
t}\right|=\frac{1}{2}\left| \int_0^{\rho(T-t,x)}\frac{\partial
g_{T-t}}{\partial t}(\dot{\gamma}(s),
\dot{\gamma}(s)) \vd s\right|\\
&=\left|\int_0^{\rho(T-t,x)}{\rm Ric}_{T-t}(\dot{\gamma}(s),
\dot{\gamma}(s)) \vd s\right|\le K_1r,
\end{split}
\end{equation*}
where $\gamma(\cdot):[0,\rho(T-t,x)]\rightarrow M$ is the unique
minimizing unit-speed geodesic associated with $g_{T-t}$ connecting
$x$ to $x_0$. Hence by (\ref{T4-1-1}) we can obtain
\begin{equation*}
\left|\bar f'(\rho(T-t,x))\bar f (\rho(T-t,x))\frac{\partial
\rho(T-t,x)}{\partial t}\right|\le CK_1.
\end{equation*}
Due to (\ref{T4-1-1}), we have
\begin{equation*}
\left|\bar f (\rho(T-t,x))\bar
f''(\rho(T-t,x))-3(\bar{f}'(\rho(T-t,x)))^2\right|\le \frac{C}{r^2}.
\end{equation*}
Set $Y^k_t:=f^{-2}(\tau_k(t),X_{\tau_k(t)}^T)$. It is easy to see
that for every $k$,  $\E [M_{\tau_k(t)}]=0$. Note that $X_t^T \in
\overline{B_{g_{T-t}}(x_0,r)}$ as long as $t<\tau(T)$, then based on
(\ref{T4-1-6}) and all the estimates above, it follows that
\begin{equation*}
\begin{split}
&\E [Y_{t}^k]\le 1+C\left(K_1+\frac{1}{r^2}\right)\int_0^t \E
[Y_{s}^k] \vd s.
\end{split}
\end{equation*}
Therefore, using the Grownwall lemma and letting $k \rightarrow
\infty$, we have
\begin{equation}\label{T4-1-7}
\E\big[f^{-2}(\tau(t)\wedge \sigma,X_{\tau(t)\wedge
\sigma}^T)\big]=\lim_{k \rightarrow \infty} \E [Y_t^k] \le {\rm
e}^{C\l(K_1+r^{-2}\r)t},
\end{equation}
where $\sigma:=\inf\{t>0:\ f^{-2}(t,X_t^T)=\infty\}$. Therefore by
(\ref{T4-1-7}) we know $\E\big[f^{-2}(\tau(t)\wedge
\sigma,X_{\tau(t)\wedge \sigma}^T)\big]<\infty$, which implies that
$\tau(t)<\sigma$ a.s. and
\begin{equation}\label{T4-1-8}
\E\big[f^{-2}(\tau(t),X_{\tau(t)}^T)\big]\le {\rm
e}^{C\l(K_1+r^{-2}\r)t}.
\end{equation}

Setting $C(K_1,r):=C(K_1+r^{-2})$ and $\hat
h(t):=\frac{C(K_1,r)v}{1-{\rm e}^{-C(K_1,r)T}}\int_0^{T-t} {\rm e}
^{-C(K_1,r)(T-s)}\vd s$, and putting the above estimate
(\ref{T4-1-8}) into (\ref{T4-1-4}) yields
\begin{equation*}
\begin{split}
&\sup_{t \in [0,T]}\E\big[|l^{\tilde h}_t|^2\big]\le \int_0^T {\rm
e}^{C K t} |\dot{\hat h}(t)|^2
\E\big[f^{-2}(\tau(t),X_{\tau(t)}^T)\big]\vd t\\
&\le {\rm e}^{C K T}\left(\frac{C(K_1,r)|v|}{1-{\rm
e}^{-C(K_1,r)T}}\right)^2 \int_0^T {\rm e}^{-C(K_1,r)t} \vd t = {\rm
e}^{C K T}\frac{C(K_1,r)|v|^2}{1-{\rm e}^{-C(K_1,r)T}}.
\end{split}
\end{equation*}
Then substituting this estimate into (\ref{c1-3}), and noting that
$|N_{t}|\le K$ for every $0\le t \le \tau(T)$, we can derive
(\ref{T4-1-0}) directly.
\end{proof}

\begin{remark}
The only point we need $(M,g_t)$ to be complete in the proof is the
application of Laplacian comparison theorem to the distance function
induced by $g_t$. If for some non-complete manifold, the Laplacian
comparison theorem is still valid (see the example in \cite{Shi}),
then we can also derive the gradient estimate \eqref{T4-1-0}.
\end{remark}

\begin{remark}\label{r1}
{\rm The gradient estimate obtained by Shi \cite{Shi} is as follows
(see also \cite[Theorem 6.15]{BLN}).}

\vskip 3mm

 Suppose that for some constant $K>0$, $|{\rm Rm}_t(x)|_t
\le K$, for every $(t,x) \in [0,T]\times U$, and there exist $x_0
\in M$, $r>0$ such that $\overline{B_{g_0}(x_0,r)}\subseteq U$, then
we have
\begin{equation}\label{r1-1}
\left|\nabla^T {\rm Rm}_T(x_0)\right|_{T}^2\le C(n)
K^2\left(K+r^{-2}+T^{-1}\right)
\end{equation}
for some positive constant $C(n)$.

\vskip 3mm

{\rm Firstly, note that the condition here is
$\overline{B_{g_0}(x_0,r)}\subseteq U$, which seems slightly
different from that in Theorem \ref{T4-1}, which is
$\overline{B_{g_t}(x_0,r)}\subseteq U$ for every $t \in [0,T]$. But
we want to remark that after changing the radius $r$ properly, the
two conditions are equivalent. Clearly, if
$\overline{B_{g_t}(x_0,r)}\subseteq U$ for every $t \in [0,T]$ and
some $r>0$, then $\overline{B_{g_0}(x_0,r)}\subseteq U$. On the
other hand, suppose $\overline{B_{g_0}(x_0,r)}\subseteq U$ for some
$r>0$, by the assumption $|{\rm Rm}_t(x)|_t \le K$ for every $(t,x)
\in [0,T]\times U$, as the same argument in \cite[Lemma 6.10]{BLN},
we have
\begin{equation}\label{r1-1a}
{\rm e}^{-2KT}g_0(x)\le g_t(x) \le {\rm e}^{2KT}g_0(x),\ \ \mbox{for
all}\ \ (t,x)\in [0,T]\times U,
\end{equation}
which implies that $\overline{B_{g_t}\big(x_0,{\rm e}^{-2KT}r\big)}
\subseteq \overline{B_{g_0}(x_0,r)}\subseteq U$ for every $t \in
[0,T]$.

Secondly, as explained in the remark of the proof of  \cite[Theorem
6.15]{Chow}, the fact that $K$ appears in \eqref{r1-1} allows us to
assume without loss of generality in the proof of \eqref{r1-1} that
$T\leq 1/K$. In particular, the case where $T\geq 1/K$ follows from
applying the theorem to the time-translated solution
$\tilde{g}_t:=g_{(T-1/K+t)}$, which satisfies the same curvature
bounds on $[0,1/K]$ as in the original solution on $[0,T-1/K]$ and
is chosen so that $\tilde{g}_{1/K}=g_T$. Now,  we  show that the
estimate (\ref{r1-1}) can be derived from the estimate
(\ref{T4-1-0}) directly. In fact, if $T<(K+r^{-2})^{-1}$, then
$\left(K_1+r^{-2}\right)\left(1-{\rm
e}^{-C(n)(K_1+r^{-2})T}\right)^{-1}\le C_1(n)T^{-1}$ for some
constant $C_1(n)>0$, hence
\begin{equation}\label{r1-2}
\left|\nabla^T {\rm Rm}_T(x_0)\right|_{T}^2\le C(n) K^2T^{-1},\ \ \
\ T<(K+r^{-2})^{-1}.
\end{equation}
If $T\geq (K+r^{-2})^{-1}$, then $\left(1-{\rm
e}^{-C(n)(K_1+r^{-2})T}\right)^{-1}\le C_2(n)$ for some constant
$C_2(n)>0$, hence
\begin{equation}\label{r1-3}
\left|\nabla^T {\rm Rm}_T(x_0)\right|_{T}^2\le C(n)
K^2(K_1+r^{-2}),\ \ \ \ T\geq (K+r^{-2})^{-1}.
\end{equation}
Clearly, combining (\ref{r1-2}) and (\ref{r1-3}), we obtain
(\ref{r1-1}) directly. Moreover, we obtain the following
conclusion.}

\vskip 3mm

Suppose all the assumptions in Theorem \ref{T4-1} hold, then for
every $0<T\le (K+r^{-2})^{-1}$, we have
\begin{equation}\label{r1-4}
\left|\nabla^T {\rm Rm}_T(x_0)\right|_{T}\le C(K,n)T^{-\frac{1}{2}}
\end{equation}
for some constant $C(K,n)>0$.

\end{remark}

Based on the estimate in Theorem \ref{T4-1}, the result on the
higher order gradient estimate can be shown as follows.

\begin{theorem}[\cite{Shi}]\label{T4-1a}
Suppose all the assumptions in Theorem \ref{T4-1} hold. For every
positive integer $m$ and $0<T<(K+r^{-2})^{-1}\wedge 1$, we have
\begin{equation}\label{T4-1a-0a}
\left|(\nabla^T)^m {\rm Rm}_T(x_0)\right|_{T}\le
C(K,n,m)T^{-\frac{m}{2}}
\end{equation}
for some positive constant $C(K,n,m)$.
\end{theorem}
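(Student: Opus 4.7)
The plan is induction on $m$; the base case $m=1$ is precisely the refined bound \eqref{r1-4} from Remark \ref{r1}. Suppose \eqref{T4-1a-0a} has been established for all orders $1,\dots,m-1$. The key idea is to shift the time origin from $0$ to $T/2$: by the inductive hypothesis, after shrinking the radius from $r$ to $r/2$ and using \eqref{r1-1a} to freely interchange $g_0$- and $g_t$-balls, one obtains the uniform bounds
$|(\nabla^t)^k {\rm Rm}_t(x)|_t \le K_k := C(K,n,k)(T/2)^{-k/2}$
for every $k\le m-1$, every $t\in[T/2,T]$, and every $x$ in the shrunken ball around $x_0$. Thus on $[T/2,T]$ every lower-order covariant derivative of ${\rm Rm}$ is uniformly bounded, which is the structural input that the proof of Theorem \ref{T4-1} required of ${\rm Rm}$ itself.

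Next, recall the Hamilton-type evolution equations \cite[Theorem 13.2]{Ham}: for each $\ell\ge 0$,
\begin{equation*}
\partial_t (\nabla^t)^\ell {\rm Rm}_t = \Delta^t (\nabla^t)^\ell {\rm Rm}_t + \sum_{i+j=\ell}(\nabla^t)^i {\rm Rm}_t * (\nabla^t)^j {\rm Rm}_t.
\end{equation*}
Set $a_s:=(\nabla^{T-s})^{m-1}{\rm Rm}_{T-s}$ for $s\in[0,T/2]$, and split the reaction term at order $m-1$ as $F_s a_s + R^{(m-1)}_s$, where $F_s a_s$ collects the two contractions of $a_s$ with ${\rm Rm}_{T-s}$ and $R^{(m-1)}_s$ gathers the remaining cross terms whose factors both have covariant-derivative order in $[1,m-2]$. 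The uniform bounds above give $|F_s|\le CK$ and $|R^{(m-1)}_s|_{T-s}\le C(K,n,m)(T/2)^{-(m-1)/2}$; a parallel decomposition $\hat F_s\nabla a_s+R^{(m)}_s$ holds for the equation governing $\nabla^{T-s}a_s$, with $|R^{(m)}_s|$ controlled by the same induction-based bounds.

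Now the argument of Theorem \ref{T4-1} is applied to $a_s$ on the reverse-time interval $[0,T/2]$, after first extending Theorem \ref{c1} to accommodate the inhomogeneous sources $R^{(m-1)}_s$ and $R^{(m)}_s$. Rerunning the It\^o computation of Theorem \ref{T1} shows that $N_t=Q_t\|_t^{-1}a_t(X_t)$ acquires an additional bounded-variation Duhamel drift $\int_0^t Q_s\|_s^{-1}R^{(m-1)}_s(X_s)\,\vd s$, and similarly for $\hat N_t$. Consequently \eqref{c1-3} is supplemented by Duhamel terms, each dominated by $C\,\E\bigl[\int_0^{T/2}|R^{(\cdot)}_s|\cdot|\hat h_s|\,\vd s\bigr]$, where the same bump function $\bar f$, time change $\tau(\cdot)$, stopping times, and choice of $\hat h$ as in the proof of Theorem \ref{T4-1} are used. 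The martingale part is bounded by $C\,T^{-1/2}K_{m-1}\sim T^{-m/2}$ (exactly as in \eqref{T4-1-0}), while the Duhamel part is bounded by $C\cdot(T/2)\cdot(T/2)^{-(m-1)/2}\sim T^{1-m/2}\le T^{-m/2}$ since $T\le 1$. Combining the two contributions yields \eqref{T4-1a-0a} at order $m$.

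The main obstacle is the rigorous extension of Theorems \ref{T1} and \ref{c1} to an inhomogeneous equation of the form $\partial_s a_s=-\Delta^s a_s+F_s a_s+R_s$: one must verify that the additional Duhamel drifts fit cleanly into the semimartingale decomposition of $Z_t$ and preserve the uniform-integrability hypothesis \eqref{c1-2}, and then track the Duhamel integrals against the specific $\hat h$ used in Theorem \ref{T4-1}. Secondary bookkeeping includes the choice of geometrically shrinking radii (e.g.\ $r_m=r(1-1/m)$) through successive inductive steps and the explicit dependence of the constants $C(K,n,m)$ on $m$, both of which are routine.
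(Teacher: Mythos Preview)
Your approach is correct and is essentially the same as the paper's: the paper carries out only the case $m=2$ in detail (where $R^{(m-1)}_s\equiv 0$, so the single inhomogeneous source $\hat G_s=R^{(m)}_s$ enters the formula \eqref{T4-1a-3}), and then declares the general inductive step to be analogous---which is precisely what you have sketched. One small bookkeeping slip: the Duhamel drift in $N_t$ coming from $R^{(m-1)}_s$ is paired with $l^h$, not with $\hat h$, and the dominant Duhamel source $R^{(m)}_s$ has rate $(T/2)^{-m/2}$ rather than $(T/2)^{-(m-1)/2}$; both corrections give a total Duhamel contribution of order $T^{1-m/2}$ rather than $T^{(3-m)/2}$, but since $T\le 1$ this is still $\le T^{-m/2}$ and the conclusion is unaffected.
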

\begin{proof}
The case of $m=1$ is just (\ref{r1-4}). We only prove the case of
$m=2$, and the other cases can be shown similarly and inductively.

According to \cite[Theorem 13.2]{Ham},
\begin{equation}\label{T4-1a-0}
\begin{split}
& \frac{\partial }{\partial t}\nabla^t{\rm Rm}_t= \Delta^t \nabla^t
{\rm Rm}_t+ {\rm Rm}_t*\nabla^t{\rm Rm}_t,\\
&  \frac{\partial }{\partial t}(\nabla^t)^2{\rm Rm}_t= \Delta^t
(\nabla^t)^2 {\rm Rm}_t+ {\rm Rm}_t*(\nabla^t)^2{\rm
Rm}_t+\nabla^t{\rm Rm}_t*\nabla^t{\rm Rm}_t.
\end{split}
\end{equation}
Again for a fixed $T>0$, we set $a_t:=\nabla^{T-t}{\rm Rm}_{T-t}$.
From (\ref{T4-1a-0}) we obtain,
\begin{equation}\label{T4-1a-1}
\begin{split}
& \frac{\partial}{\partial t}a_{t}=-\Delta^{(T-t)} a_{t}+ F_t a_t,\\
& \frac{\partial}{\partial t}\nabla^{(T-t)}
a_t=-\Delta^{(T-t)}(\nabla^{(T-t)} a_t)+ \hat F_t \nabla^{(T-t)}
a_t+\hat G_t,
\end{split}
\end{equation}
where $F_t=-{\rm Rm}_{T-t}*$, $\hat F_t=-{\rm Rm}_{T-t}*$ and $\hat
G_t=-\nabla^{(T-t)}{\rm Rm}_{T-t}*\nabla^{(T-t)}{\rm Rm}_{T-t}$.
Note that comparing with the equation (\ref{e3-1}), we need to estimate
an extra term $\hat G_t$ appearing here by using (\ref{T4-1-0}).

Due to (\ref{r1-1a}), there exists an open domain $\tilde U$ and a
small constant $0<\delta<1$ (which only depends on $K,n$ since we
assume $T<1$)  such that for every $t \in [0,T]$,
\begin{equation}\label{T4-1a-2}
\overline{B_{g_t}(x_0, \delta r)} \subseteq \tilde U\subseteq
\overline{B_{g_t}\l(x_0,\frac{r}{2}\r)} \subseteq
\overline{B_{g_t}(x_0,r)}\subseteq U.
\end{equation}

Let $X_t^T$ and $U_t^T$ be the Brownian motion and stochastic
horizontal lift associated with the time-changing metric $\{\tilde
g_t=g_{T-t}\}_{t \in [0,T]}$. We define $N_t$, $\hat N_t$, $\tilde
h_t$, $l^{\tilde h}_t$, $\Lambda(t)$ and $\tau(t)$ by almost the
same procedure as that in the proof of Theorem \ref{T4-1}, the only
difference is that here we take $v \in T^{1,5}_{x_0}M$,
$a_t=\nabla^{T-t}{\rm Rm}_{T-t}$ and $F_t$, $\hat F_t$ to be the
operators in (\ref{T4-1a-1}), we replace the stopping time $\tau_U$
and the radius $r>0$ by  $\tau_{\tilde U}$ and $\delta r$
respectively, and we set $\tilde h_t:=\hat h\big(\Lambda(t)\wedge
\frac{T}{2} \big)$ for $\hat h(t):= \Big(C(K_1,\delta r)\big(1-{\rm
e}^{-\frac{C(K_1,\delta r)T}{2}}\big)^{-1}\int_0^{\frac{T}{2}-t}
{\rm e} ^{-C(K_1,\delta r)(\frac{T}{2}-s)}\vd s\Big)v$. Therefore,
$\tilde h_t=0$ for every $\tau(\frac{T}{2})\le t \le T$, and $X_t^T
\in \overline{B_{g_t}(x_0, \delta r)}$ as long as
$t<\tau(\frac{T}{2})$.

Let $Z_t:=\langle \hat N_t, \tilde h_t\rangle- \langle  N_t,
l^{\tilde h}_t\rangle$. By carefully tracking the proofs of Theorems
\ref{T1} and \ref{c1}, together with (\ref{T4-1a-1}) we have
\begin{equation}\label{T4-1a-3}
\begin{split}
& \langle (\nabla^T)^2 {\rm Rm}_T)(x_0), v\rangle=\E\big[\langle \hat N_0, \tilde h_0\rangle\big]\\
&= -\E\Big[\langle N_{T \wedge \tau(\frac{T}{2})}, l_{T \wedge
\tau(\frac{T}{2})}^{\tilde h}\rangle\Big] -\E\Big[\int_0^{T \wedge
\tau(\frac{T}{2})}\big\langle \hat Q^T_s \|^{-1}_s\hat G_s(X_s^T),
\tilde h_s\big\rangle \vd s\Big].
\end{split}
\end{equation}

Since for every $y \in \tilde U$, by (\ref{T4-1a-2}) we know that
$\overline{B_{g_t}(y,\frac{r}{2})}\subseteq U$ for every $t \in
[0,T]$.  Therefore, according to Theorem \ref{T4-1} and Remark
\ref{r1},  for every $0<t<(K+r^{-2})^{-1}$ and $y \in \tilde U$, we
have
\begin{equation*}
|\nabla^t {\rm Rm}(y)|_t\le C(K,n)t^{-\frac{1}{2}}.
\end{equation*}
Note that $\tau(\frac{T}{2})\le \frac{T}{2}$ and $X_t^T \in \tilde
U$ for every $t<\tau(\frac{T}{2})$, so for every $T<(K+r^{-2})^{-1}$
and $t<\tau(\frac{T}{2})$,
\begin{equation*}
|\nabla^{T-t}{\rm Rm}_{T-t}(X_t^T)|_{T-t}\le C(K,n)T^{-\frac{1}{2}},
\end{equation*}
which implies that for every $T<(K+r^{-2})^{-1}$,
\begin{equation*}
\E\big[| N_{T \wedge \tau(\frac{T}{2})}|^2\big]\le C(K,n)T^{-1}, \ \
\E\Big[\int_0^{T \wedge \tau(\frac{T}{2})}|\hat{G}_s|\vd s\Big]\le
C(K,n).
\end{equation*}

On the other hand, as explained in the proof of Theorem \ref{T4-1}
and Remark \ref{r1}, if $T<(K+r^{-2})^{-1}$, then
\begin{equation*}
\E\big[|l_{T \wedge \tau(\frac{T}{2})}^{\tilde h}|^2\big]\le
C(K,n)T^{-1},
\end{equation*}
in particular, we use the property $\left(K_1+(\delta
r)^{-2}\right)\left(1-{\rm e}^{-C(n)(K_1+(\delta
r)^{-2})T}\right)^{-1}\le C_1(K,n)T^{-1}$ for every
$T<(K+r^{-2})^{-1}$ since $\delta$ only depends on $K,n$.

Combining all the above estimates together into (\ref{T4-1a-3})
yields the conclusion for $m=2$.
\end{proof}

\begin{remark}\label{r2}
By carefully tracking the proof of Theorems \ref{T4-1} and
\ref{T4-1a}, if we replace the condition that
$\overline{B_{g_t}(x_0,r)}\subseteq U$ for every $t \in [0,T]$ by
the following assumption,
\begin{equation}\label{r2-1}
M(t,r):=\big\{x \in M:\ \eta(t,x)\le r^2 \big\}\subseteq U,\ \forall
t \in [0,T],
\end{equation}
where $\eta:\R_+\times M \rightarrow \R_+$ is a non-negative
$C^{1,2}$ function, such that
\begin{equation*}
\big|\partial_t \eta(t,x)-\Delta^t \eta(t,x)\big|\le C(n),\ \
|\nabla^t \eta(t,x)|_t^2\le C(n)\eta(t,x),\ \ \forall \ (t,x)\in
[0,T]\times U
\end{equation*}
for some constant $C(n)>0$, then  for every positive integer $m$ and
every $0<T<(1+r^{-2})^{-1}$, the gradient estimate \eqref{T4-1a-0a}
is still true.

In fact, under the assumption \eqref{r2-1}, we only need to replace
the function $f(t,x):=\bar f(\rho(T-t,x))$ in the proof of Theorem
\ref{T4-1} by the new function $\tilde f(t,x):=\bar
f(\sqrt{\eta(T-t,x)})$. Then all the arguments in the proof are
still valid due to the application of \eqref{r2-1} to estimate
$\tilde f^{-2}(t,X_t^T)$ in \eqref{T4-1-6}. Thus we can obtain the
estimate \eqref{T4-1a-0a}.

In  particular, the condition \eqref{r2-1} was used in \cite{EH} to
study local gradient estimate for the second fundamental form
evolving by the mean curvature flow.
\end{remark}

\subsection{Local gradient estimate for the second fundamental form of the mean curvature flow}

Throughout this subsection, let $M$ be an $n$-dimensional manifold
and $\phi: M \times [0,T_c)\rightarrow \R^{n+1}$ be a family of
smooth embedding maps with the images $M_t:=\{\phi(x,t):\ x \in
M\}\subseteq \R^{n+1}$ ($n \ge 2$). We suppose that $\phi$ satisfies
the following equation
\begin{equation}\label{e4-2-1}
\begin{cases}
&\frac{\partial}{\partial t}\phi(x,t)=-H(x,t)\nu(x,t),\\
& \phi(\cdot,0)=\phi_0,
\end{cases}
\end{equation}
where $H(x,t)$, $\nu(x,t)$ are the mean curvature and the outward
unit normal vector of hypersurface  $M_t$ at $\phi(x,t)$
respectively, and $\phi_0$ is the initial hypersurface for the flow.


We call $\{\phi(\cdot,t)\}_{t \in [0,T_c)}$ the mean curvature flow
(we write MCF for short) with initial point $\phi_0$. The (smooth)
MCF  was first investigated by G. Huisken in \cite{Hus}, where the
local existence for (\ref{e4-2-1}) was shown when $M$ is compact. For
an overall  introduction to the theory, we refer
readers to \cite{M,Zhux}.

For every fixed $t\in [0,T_c)$, let $\phi(t):=\phi(\cdot,t):M
\rightarrow \R^{n+1}$  be the embedding map and let $\vd \phi(t)^* :
T^*\R^{n+1} \rightarrow  T^*M$ be the pull back map via $\phi(t)$.
Let $\{g_t\}_{t \in [0,T_c)}$ be the metric induced by the MCF,
i.e., $g_t:=\vd \phi(t)^* \bar g$ where $\bar g$ is the standard
Euclidean norm on $\R^{n+1}$. From (\ref{e4-2-1}), it is easy to
check that
\begin{equation*}
\frac{\partial }{\partial t}g_t(x)=-2H(x,t) A(x,t),
\end{equation*}
where the mean curvature $H(x,t)$ is the trace of $A(x,t)$, and
$A(x,t)=\{h_{ij}(x,t)\}$ is the second fundamental form associated
with $\phi(t)$, that is,
\begin{eqnarray*}
 h_{ij}(x,t):= -\left\langle \frac{\partial^2
\phi(x,t)}{\partial x_i \partial x_j }, \nu(x,t)\right \rangle,
\end{eqnarray*}
where, in particular,  $\langle\cdot, \cdot \rangle$ denotes  the
Euclidean inner product on $\R^{n+1}$.

In this subsection, we assume that the equation (\ref{e4-2-1}) holds
on $U \times [0,T]$ for some constant $0<T<T_c$ and open domain $U
\subseteq M$, $(M,g_t)$ is complete for every $t \in [0,T_c)$, but
with no requirement that $M$ is compact. We will apply Theorem
\ref{c1} to give a local gradient estimate for $A$. Note that by
\cite[Proposition 2.3.1]{M} and \cite[Lemma 2.3.4]{M}, for each
$(x,t) \in U \times [0,T]$, we have
\begin{equation}\label{e4-2-2}
\begin{split}
& \frac{\partial }{\partial t}A(t)=\Delta^t A(t)+A(t)*A(t)*A(t),\\
& \frac{\partial }{\partial t}\nabla^t A(t)= \Delta^t \nabla^t
A(t)+A(t)*A(t)*\nabla^t A(t).
\end{split}
\end{equation}
So, from (\ref{e4-2-2}), we know that $a_t:=A(T-t)$ satisfies the
equation (\ref{e3-1}) with respect to the time changing metric
$\{\tilde g_t:=g_{T-t}\}_{t \in [0,T]}$ and the operators $F_t$ and
$\hat F_t$ should be chosen to be $F_t=-A(T-t)*A(T-t)*$ and $\hat
F_t=-A(T-t)*A(T-t)*$, respectively.

We can obtain the following estimate which was first shown by K.
Ecker and G. Huisken \cite[Theorem 3.7]{EH} by using some analytic
method.

\begin{theorem}[\cite{EH}]\label{T4-2}
Suppose that for some constant $K>0$, $|A(x,t)|_t \le K$ for every
$(x,t) \in U\times [0,T]$, and there exist $x_0 \in M$, $r>0$ such
that $\overline{B_{g_t}(x_0,r)}\subseteq U$ for every $t \in [0,T]$,
then under the MCF \eqref{e4-2-1} we have
\begin{equation}\label{T4-2-1}
|\nabla^T A(x_0,T)|_{T}^2\le C(n){\rm
e}^{C(n)K^2T}K^2\left(K^2+r^{-2}\right)\left(1-{\rm
e}^{-C(n)(K^2+r^{-2})T}\right)^{-1},
\end{equation}
for some positive constant $C(n)$. Moreover, for every positive
integer $m$ and $0<T<(K^2+r^{-2})^{-1}\wedge 1$, we have
\begin{equation}\label{T4-2-2}
\left|(\nabla^T)^m A(x_0,T)\right|_{T}\le C(K,n,m)T^{-\frac{m}{2}}
\end{equation}
for some positive constant $C(K,n,m)$.
\end{theorem}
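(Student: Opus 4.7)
The plan is to mimic the proof of Theorem \ref{T4-1} (and then Theorem \ref{T4-1a}) almost verbatim, with the tensor $a_t := A(T-t)$ in $\Gamma(T^{0,2}M)$ in place of the curvature tensor and the operators $F_t a = -A(T-t)*A(T-t)*a$, $\hat F_t \hat a = -A(T-t)*A(T-t)*\hat a$ coming from \eqref{e4-2-2}. One applies Theorem \ref{c1} with the time-reversed metric $\tilde g_t := g_{T-t}$, the Brownian motion $X_t^T$ and horizontal lift $U_t^T$ associated to it, and the same cut-off process $\tilde h_t := \hat h(\Lambda(t)\wedge T)$ built from $f(t,x) := \bar f(\rho(T-t,x))$. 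The identity
\begin{equation*}
\langle \nabla^T A(x_0,T), v \rangle = -\E\bigl[\langle N_{T\wedge\tau(T)}, l^{\tilde h}_{T\wedge\tau(T)}\rangle\bigr]
\end{equation*}
then produces \eqref{T4-2-1} as soon as the two input estimates $|N_t|\le CK$ and $\E[|l^{\tilde h}_t|^2]<\infty$ with the right rate are available.

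The analytic inputs to be collected are the following. From $F_t = -A*A*$ one gets $|F_t(x)|_t \le C(n)K^2$ and similarly for $\hat F_t$, so that \eqref{T4-1-2} becomes $|Q^T_t|,|(Q^T_t)^{-1}|,|\hat Q^T_t|\le e^{C(n)K^2 t}$; this is exactly the source of the exponential factor $e^{C(n)K^2 T}$ in \eqref{T4-2-1}. For the metric evolution, $\partial_t g_t = -2HA$ gives $|\partial_t g_t|_t \le 2|H||A| \le C(n)|A|^2 \le C(n)K^2$, so the role played by the Ricci bound $K_1$ in the proof of Theorem \ref{T4-1} is now played by $C(n)K^2$. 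In particular, $|\partial_t \rho(t,x)| \le C(n)K^2 r$ on $\overline{B_{g_t}(x_0,r)}$, and by the Gauss equation $\Ric_t = H A - A\circ A$ one has $|\Ric_t|_t \le C(n)K^2$, which feeds the Laplacian comparison
\begin{equation*}
\Delta^{(T-t)}\rho(T-t,x) \le C(n)K\,\coth\!\bigl(C(n)K\,\rho(T-t,x)\bigr)
\end{equation*}
used to bound the drift in the Itô expansion of $f^{-2}(\tau_k(t),X^T_{\tau_k(t)})$ as in \eqref{T4-1-6}. With these replacements, the Gronwall argument produces $\E[f^{-2}(\tau(t),X_{\tau(t)}^T)] \le e^{C(n)(K^2+r^{-2})t}$, and the choice $\hat h(t) := C(K^2,r)(1-e^{-C(K^2,r)T})^{-1}\int_0^{T-t} e^{-C(K^2,r)(T-s)}\,ds\cdot v$ yields \eqref{T4-2-1} exactly as in the Ricci flow case.

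The higher-order estimate \eqref{T4-2-2} is proved by induction on $m$, in complete analogy with Theorem \ref{T4-1a}. Differentiating \eqref{e4-2-2} successively gives the Hamilton-type evolution equations
\begin{equation*}
\frac{\partial}{\partial t}(\nabla^t)^m A = \Delta^t(\nabla^t)^m A + \sum_{i+j+k=m}(\nabla^t)^i A * (\nabla^t)^j A * (\nabla^t)^k A,
\end{equation*}
so that for $a_t := (\nabla^{T-t})^{m-1}A(T-t)$ one obtains an equation of the form \eqref{T4-1a-1} in which the inhomogeneous term $\hat G_t$ is a sum of $*$-products of lower-order covariant derivatives of $A$ on the smaller ball $\overline{B_{g_t}(x_0,\delta r)}\subseteq \tilde U$ (with $\delta$ chosen as in \eqref{T4-1a-2} using the metric equivalence on short times). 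By the induction hypothesis applied on $\tilde U$, each such lower-order term is bounded by $C(K,n,m)T^{-j/2}$, which gives the required integrability of $N_{T\wedge\tau(T/2)}$, $\hat G_s$ and $l^{\tilde h}_{T\wedge\tau(T/2)}$; the analogue of \eqref{T4-1a-3} then closes the induction.

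The main obstacle is the Ricci-type bound entering the Laplacian comparison: unlike the Ricci flow, where the bound on $\Ric_t$ is a direct hypothesis, here it must be produced from the second fundamental form via the Gauss equation. Provided one accepts the resulting bound $|\Ric_t|_t \le C(n)K^2$ together with the metric comparison $e^{-C(n)K^2 T} g_0 \le g_t \le e^{C(n)K^2 T} g_0$ (so that the two formulations $\overline{B_{g_0}(x_0,r)}\subseteq U$ and $\overline{B_{g_t}(x_0,r)}\subseteq U$ for all $t$ are equivalent up to adjusting $r$, and the shrunken ball $\overline{B_{g_t}(x_0,\delta r)}$ in the induction step stays inside $U$), every other step is a direct transcription of the Ricci flow argument, so no new probabilistic difficulty arises.
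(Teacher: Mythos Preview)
Your proposal is correct and follows essentially the same approach as the paper: the paper's proof simply notes that $|F_t|_t,|\hat F_t|_t,|\partial_t g_t|_t\le C(n)K^2$ and that the Gauss equation gives $|{\rm Ric}_t|_t\le C(n)K^2$, then instructs the reader to repeat the proofs of Theorems~\ref{T4-1} and~\ref{T4-1a} verbatim with these bounds in place of $K$ and $K_1$. Your write-up is in fact more explicit than the paper's, but the strategy and all the key ingredients (including the higher-order evolution equations from \cite[Lemma 2.3.4]{M}) coincide.
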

\begin{proof}
As in the proof of Theorem \ref{T4-1}, let $X_t^T$, $U_t^T$ be the
Brownian motion and stochastic horizontal lift associated with the
time changing metric $\{\tilde g_t\}$ respectively, and let $Q_t^T$,
$\hat Q_t^T$ be separately defined by (\ref{Q1}) and (\ref{Q2}) with
respect to $X_t^T$, $U_t^T$ and the operators $F_t$, $\hat F_t$
given above.

Note that $|F_t(x)|_t\le CK^2$, $|\hat F_t(x)|_t \le CK^2$, and
$|\partial_t g_t(x)|_t \le CK^2$ when $(x,t) \in U \times [0,T]$.
Moreover, by the Gauss equation, we have $|{\rm Ric}_t(x)|\le CK^2$
for every $(x,t) \in U \times [0,T]$. Then we can repeat the
procedure in the proof of Theorem \ref{T4-1} to prove
(\ref{T4-2-1}).

Also note that by \cite[Lemma 2.3.4]{M}, for every non-negative
integer $m$,
\begin{equation*}
\begin{split}
& \frac{\partial }{\partial t}(\nabla^t)^m A(t)= \Delta^t
(\nabla^t)^m A(t)+\sum_{i+j+k=m,\ i,j,k \in \mathbb{N}}(\nabla^t)^i
A(t)*(\nabla^t)^j A(t)*(\nabla^t)^k A(t).
\end{split}
\end{equation*}
Then following the same procedure as that in the proof Theorem
\ref{T4-1a}, we can show (\ref{T4-2-2}) based on (\ref{T4-2-1}).
\end{proof}
\begin{remark}
As explained in Remark \ref{r2}, we can also obtain the above
estimate under the assumption \eqref{r2-1}, which is exactly the
conclusion of \cite[Theorem 3.7]{EH}.
\end{remark}

\subsection{Local gradient estimate for the second fundamental form of the forced mean curvature flow}

\subsubsection{Type I}

Let $\phi:M\times [0,T_c)\rightarrow \R^{n+1}$ ($n\geq2$) be a
family of smooth embedding maps evolving as follows
 \begin{eqnarray} \label{4.3.1}
 \left\{
 \begin{array}{lll}
 \frac{\partial}{\partial
 t}\phi(x,t)=-H(x,t)\nu(x,t)+\kappa(t)\phi(x,t), \\
 \\
 \phi(\cdot,0)=\phi_0, & \quad
 \end{array}
 \right.
\end{eqnarray}
where, as in Subsection 4.2, $H(x,t)$ and $\nu(x,t)$ are the mean
curvature and outward unit normal vector of the hypersurface
$M_t:=\{\phi(x,t):\ x \in M\}$ at $\phi(x,t)$, $\phi_0$ is the
initial hypersurface, and $\kappa(t)$ is a bounded continuous
function on $[0,T_c)$. Clearly, the flow (\ref{4.3.1}) can be
obtained by adding a forcing term $\kappa(t)\phi(t,x)$ to the
classical MCF in the direction of the position vector. In fact, the
forced flow (\ref{4.3.1}) can be seen as an extension of the MCF,
since it degenerates to  the MCF if $\kappa(t)\equiv0$. As pointed
out in \cite{lmw}, this forced MCF (\ref{4.3.1}) is different from
the flow (\ref{e4-2-1}), this is because for the forcing term
$\kappa(t)\phi(x,t)$ in (\ref{4.3.1}), although the tangent
component of $\phi(x,t)$ does not affect the behavior of the
evolving hypersurfaces, the normal component of $\phi(x,t)$ is
usually not $\nu (x,t)$. More precisely, the normal component
$\langle\phi(x,t),v(x,t)\rangle$ is not only depending on the
time-variable $t$ but also depending on the space-variable
$x\in{M}$. Readers can find that the convergence situation of the
flow (\ref{4.3.1}) is more complicated than that of the MCF even if
the initial hypersurface is a sphere (see \cite[Remark 2.2]{m3}).
This flow was firstly introduced by Mao, Li and Wu \cite{mlw} for
considering an entire graph evolving under this forced flow, and
later they also investigated an $n$-dimensional ($n\geq2$) compact
and strictly convex hypersurface evolving under the flow
(\ref{4.3.1}) in \cite{lmw}. In these two cases, by making
discussions on the component function $\kappa(t)$ of the forcing
term, some convergence results can be obtained (cf. \cite[Main
Theorem]{mlw}, \cite[Theorem 1.1]{lmw}). Furthermore, Mao \cite{m2}
studied the evolution of two-dimensional graphs in $\mathbb{R}^4$
under the flow (\ref{4.3.1}). Clearly, this is a high-codimensional
MCF problem, and naturally, in general, doing estimates for the
geometric quantities, like the second fundamental form, the mean
curvature, etc, in the case of high-codimension is more difficult
than the case of hypersurface, since in the case of
high-codimension, the evolving submanifolds have at least two normal
vectors which leads to the complexity of evolution equations of
those geometric quantities. One thing being worthy to be pointed out
here is that Mao \cite{m1} has improved this spirit of adding a
forcing term in direction of the position vector to the case of
hyperbolic MCF, and some convergence results have been shown therein.

\emph{To avoid confusion, in the rest part of this subsection we
will follow the usage of notations in Subsection 4.2}. That is,
$g_t$ is the induced metric on the evolving hypersurface $\phi(t)$,
and $h_{ij}(t)$ denotes the component of the second fundamental form
$A(t)$. From (\ref{4.3.1}), it is easy to get
\begin{eqnarray*}
\frac{\partial }{\partial t}g_t=-2H(t) A(t)+2\kappa(t)g_{t},
\end{eqnarray*}
which has an extra term $2\kappa(t)g_{t}$ compared with the case of
classical MCF. We assume now that the equation (\ref{4.3.1}) holds
on $U \times [0,T]$ for some constant $0<T<T_c$ and open domain $U
\subseteq M$, $(M,g_t)$ is complete for every $t \in [0,T_c)$, but
with no requirement that $M$ is compact. By \cite[Lemma 2.2]{lmw}
and \cite[Lemma 6.12]{lmw}, for each $(x,t) \in U \times [0,T]$, we
have
\begin{equation}\label{4.3.2}
\begin{split}
& \frac{\partial }{\partial t}A(t)=\Delta^t
A(t)+A(t)*A(t)*A(t)+\kappa(t)A(t), \\
& \frac{\partial }{\partial t}\nabla^t A(t)= \Delta^t \nabla^t
A(t)+A(t)*A(t)*\nabla^t A(t)+\kappa(t)\nabla^t A(t).
\end{split}
\end{equation}
So, from (\ref{4.3.2}), we know that $a_t:=A(T-t)$ satisfies the
equation (\ref{e3-1}) with respect to the time changing metric
$\{\tilde g_t:=g_{T-t}\}_{t \in [0,T]}$ and the operators $F_t$ and
$\hat F_t$ should be chosen as
$F_t=\big(-A(T-t)*A(T-t)*\big)-\kappa(T-t)$ and $\hat
F_t=\big(-A(T-t)*A(T-t)*\big)-\kappa(T-t)$, respectively.

\begin{theorem} \label{theoremxx1}
Suppose that for some constant $K>0$, $|A(x,t)|_t \le K$ for every
$(x,t) \in U \times[0,T]$, and there exist $x_0 \in M$, $r>0$ such
that $\overline{B_{g_t}(x_0,r)}\subseteq U$ for every $t \in [0,T]$,
then under the flow (\ref{4.3.1}) we have
\begin{equation}\label{th4-3-1}
\left|\nabla^T A(x_0,T)\right|_{T}^2\le C(n){\rm
e}^{\left[C(n)(K^2+k_{+})\right]T}K^2\left(K^2+k_{+}+r^{-2}\right)\left(1-{\rm
e}^{-C(n)(K^2+k_{+}+r^{-2})T}\right)^{-1}
\end{equation}
for some positive constant $C(n)$, where
$k_{+}:=\sup\limits_{0\leq{t}\leq T}|\kappa(t)|$ is the supremum of
$\kappa(t)$ on $[0,T]$. Moreover, for every positive integer $m$ and
$0<T<(K^2+k_{+}+r^{-2})^{-1}\wedge 1$, we have
\begin{equation}\label{th4-3-2}
\left|(\nabla^T)^m A(x_0,T)\right|_{T}\le
C(K,k_+,n,m)T^{-\frac{m}{2}}
\end{equation}
for some positive constant $C(K,k_+,n,m)$.
\end{theorem}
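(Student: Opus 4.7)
The strategy is to imitate the proofs of Theorem \ref{T4-1} and Theorem \ref{T4-2} essentially verbatim, with bookkeeping to absorb the extra $\kappa(t)$-terms. First I would set $a_t := A(T-t)$ and verify, via \eqref{4.3.2}, that $a_t$ satisfies \eqref{e3-1} with respect to $\tilde g_t := g_{T-t}$, where
\begin{equation*}
F_t \theta = -A(T-t)*A(T-t)*\theta - \kappa(T-t)\theta,\qquad \hat F_t \hat\theta = -A(T-t)*A(T-t)*\hat\theta - \kappa(T-t)\hat\theta.
\end{equation*}
Since $|A|_t \le K$ and $|\kappa(T-t)| \le k_+$, one obtains $|F_t(x)|_t, |\hat F_t(x)|_t \le C(K^2+k_+)$ on $U$, so that the operators $Q_t^T$, $\hat Q_t^T$ determined by \eqref{Q1} and \eqref{Q2} satisfy
\begin{equation*}
|Q_t^T|,\ |(Q_t^T)^{-1}|,\ |\hat Q_t^T| \le {\rm e}^{C(K^2+k_+)t},\qquad t \in [0,\tau_U\wedge T].
\end{equation*}

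Next I would apply Theorem \ref{c1} with the Brownian motion $X_t^T$ and stochastic horizontal lift $U_t^T$ on $M$ with respect to $\{\tilde g_t\}$, using the same cutoff construction as in Theorem \ref{T4-1}: pick $\bar f \in C^\infty_b(\R)$ supported in $[-r,r]$ with $\bar f(0)=1$ and derivative bounds \eqref{T4-1-1}, set $f(t,x) := \bar f(\rho(T-t,x_0,x))$, define $\Lambda(t) := \int_0^t f^{-2}(s,X_s^T)1_{\{s<\tau_U\wedge T\}}\,\vd s$ and $\tau(t) := \inf\{s : \Lambda(s) \ge t\}\wedge T$, and take $\tilde h_t := \hat h(\Lambda(t)\wedge T)$ for a suitable non-random $\hat h$ with $\hat h(0)=v$, $\hat h(T)=0$. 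The key quantitative inputs into the It\^o expansion of $f^{-2}(\tau_k(t),X^T_{\tau_k(t)})$ analogous to \eqref{T4-1-6} are a Ricci bound and a metric-derivative bound. The Gauss equation gives $|\operatorname{Ric}_t|_t \le CK^2$, so the Laplacian-comparison term is controlled by $C(K/r + r^{-2})$; on the other hand, since $\partial_t g_t = -2HA + 2\kappa g_t$ one has $|\partial_t g_t|_t \le C(K^2+k_+)$, so the time-derivative of $\rho(T-t,\cdot)$ is bounded by $C(K^2+k_+)r$, producing the combined exponent $C(K^2+k_++r^{-2})$. Gr\"onwall's inequality then yields
\begin{equation*}
\E\bigl[f^{-2}(\tau(t),X^T_{\tau(t)})\bigr]\le {\rm e}^{C(K^2+k_++r^{-2})t}.
\end{equation*}

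Choosing $C(K,k_+,r) := C(n)(K^2+k_++r^{-2})$ and
\begin{equation*}
\hat h(t) := \frac{C(K,k_+,r)\,v}{1-{\rm e}^{-C(K,k_+,r)T}}\int_0^{T-t}{\rm e}^{-C(K,k_+,r)(T-s)}\,\vd s,
\end{equation*}
plugging the moment bound of $l^{\tilde h}$ back into \eqref{c1-3} and using $|N_t|\le |Q_t^T||A|_{T-t} \le K\,{\rm e}^{C(K^2+k_+)t}$ yields \eqref{th4-3-1} after the usual Cauchy--Schwarz step. For the higher-order estimate \eqref{th4-3-2}, I would proceed by induction on $m$ exactly as in Theorem \ref{T4-1a}: from \cite[Lemma 2.2]{lmw} and \cite[Lemma 6.12]{lmw} (or their iterates) one derives that $(\nabla^t)^m A(t)$ satisfies a parabolic equation of the same schematic form \eqref{T4-1a-1} with an extra $\kappa(t)(\nabla^t)^m A(t)$-term absorbed into $F_t,\hat F_t$ and a remainder $\hat G_t$ which is a sum of contractions of lower-order derivatives of $A$; estimating $\hat G_t$ on a slightly smaller ball $\overline{B_{g_t}(x_0,\delta r)}$ using the induction hypothesis and restricting to $0<T<(K^2+k_++r^{-2})^{-1}\wedge 1$ gives \eqref{th4-3-2}.

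The only nontrivial point is the distance-evolution step, since the forced flow alters $\partial_t g_t$ and hence both the drift in \eqref{T4-1-5} and the metric-derivative bound feeding into the It\^o estimate of $f^{-2}$; the main obstacle is therefore to verify cleanly that $k_+$ enters the exponent in the exact additive way claimed, rather than multiplying $K^2$ or $r^{-2}$. Once that accounting is done, everything else is a routine transcription of the arguments already given in Theorems \ref{T4-1}, \ref{T4-1a} and \ref{T4-2}.
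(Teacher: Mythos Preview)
Your proposal is correct and follows essentially the same approach as the paper. The paper's own proof is little more than a pointer back to Theorems \ref{T4-1} and \ref{T4-1a}, recording precisely the three bounds you isolate --- $|F_t|_t,|\hat F_t|_t\le C(K^2+k_+)$, $|\partial_t g_t|_t\le C(K^2+k_+)$, and $|\Ric_t|_t\le CK^2$ from the Gauss equation --- and then invoking the earlier arguments verbatim; your write-up simply makes the bookkeeping explicit, including the correct identification that $k_+$ enters only through the metric-evolution and $F_t,\hat F_t$ bounds rather than through the Laplacian comparison.
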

\begin{proof}
Now, let $X_t^T$, $U_t^T$ be the Brownian motion and stochastic
horizontal lift associated with the time changing metric $\{\tilde
g_t\}$ respectively, and let $Q_t^T$, $\hat Q_t^T$ be separately
defined by (\ref{Q1}) and (\ref{Q2}) with respect to $X_t^T$,
$U_t^T$ and the operators $F_t$, $\hat F_t$ given above.

By assumption, we have $|F_t(x)|_t\le C(K^2+k_{+})$, $|\hat
F_t(x)|_t \le C(K^2 +k_{+})$ and  $|\partial_t g_t(x)|_t \le C(K^2
+k_{+})$ when $(x,t) \in U \times [0,T]$. Moreover, by the Gauss
equation, we have $|{\rm Ric}_t(x)|\le CK^2$ if $(x,t) \in U \times
[0,T]$. Then we can repeat the procedure in the proof of Theorem
\ref{T4-1} to show (\ref{th4-3-1}).

According to \cite[Lemma 6.12]{lmw}, for every non-negative integer
$m$,
\begin{equation*}
\begin{split}
& \frac{\partial }{\partial t}(\nabla^t)^m A(t)= \Delta^t
(\nabla^t)^m A(t)+\sum_{i+j+k=m,\ i,j,k \in \mathbb{N}}(\nabla^t)^i
A(t)*(\nabla^t)^j A(t)*(\nabla^t)^k A(t)+\kappa(t)(\nabla^t)^m A(t).
\end{split}
\end{equation*}
Then using (\ref{th4-3-1}) and following the same procedure as that
in the proof of Theorem \ref{T4-1a}, we can show (\ref{th4-3-2}).
\end{proof}

\subsubsection{Type II}

Let $\phi: M \times [0,T_c) \rightarrow \R^{n+1}$ ($n\geq2$) be a
family of smooth embedding maps evolving as follows
 \begin{eqnarray} \label{4.3.3}
 \left\{
 \begin{array}{lll}
 \frac{\partial}{\partial
 t}\phi(x,t)=\left(\kappa(t)-H(x,t)\right)\nu(x,t), \\
 \\
 \phi(\cdot,0)=\phi_0, & \quad
 \end{array}
 \right.
\end{eqnarray}
where $H(x,t)$, $\nu(x,t)$ and $\phi_0$ have the same meanings as
those in Subsection 4.2, and $\kappa(t)$ is a function with respect
to the time variable $t$.

The forced MCF (\ref{4.3.3}) has been extensively studied by many
mathematicians. For instance, in the setting that $M_0=\phi_0(M)$ is
compact and strictly convex, if $\kappa(t)=\int_{M_t}H\vd
\mu_{t}/\int_{M_t}\vd\mu_{t}$ (i.e., the average of the mean
curvature on $M_{t}=\phi(M,t)$) with $\vd\mu_{t}$ the volume element
of $M_t$, G. Huisken \cite{Hus2} has proved that this is a
volume-preserving flow which exists for all the time, i.e.,
$t\in[0,\infty)$, and $M_{t}$ converges to a round sphere; if
$\kappa(t)=\int_{M_t}H^{2}\vd\mu_t/\int_{M_t}\vd\mu_t$, J. McCoy
\cite{mccoy} has proved that this is a surface-area preserving flow
which also exists for all the time and converges to a round sphere;
furthermore, if
$\kappa(t)=\int_{M_t}HE_{k+1}\vd\mu_t/\int_{M_t}E_{k+1}\vd\mu_t$
with $E_{l}$ the $l$-th elementary symmetric function of the
principle curvatures of $M_t$, McCoy \cite{mccoy2} has investigated
this case and has shown that the flow exists for all the time and
converges to a round sphere which generalizes the results of the
volume-preserving MCF \cite{Hus2} and the area-preserving MCF
\cite{mccoy}. Because of this, McCoy called this flow \emph{the
mixed volume-preserving MCF}. The trivial case is $\kappa(t)=0$, and
in this case the flow (\ref{4.3.3}) degenerates into the classical
MCF, and then G. Huisken's convergence result in \cite{Hus} can be
applied. Clearly, different forcing terms lead to different
existence and convergence results for the forced flow (\ref{4.3.3}).
A natural question can be issued, that is, ``\emph{how to unify all
the cases?}". Li and Salavessa \cite{li} have given a positive
answer to this problem.

From (\ref{4.3.3}), it is easy to get
\begin{eqnarray*}
\frac{\partial }{\partial t}g_t=2\left(\kappa(t)-H(t)\right)A(t).
\end{eqnarray*}
We assume now that the equation (\ref{4.3.3}) holds on $U \times
[0,T]$ for some constant $0<T<T_c$ and open domain $U \subseteq M$,
$(M,g_t)$ is complete for every $t \in [0,T_c)$, but with no
requirement that $M$ is compact. By the argument in \cite{Hus2} (for
instance, \cite[Proposition 1.1]{Hus2} and the last equality on Page
46 of \cite{Hus2}), for each $(x,t) \in U \times [0,T]$, we have
\begin{equation}\label{4.3.4}
\begin{split}
& \frac{\partial }{\partial t}A(t)=\Delta^t
A(t)+A(t)*A(t)*A(t)+\kappa(t)A(t)*A(t), \\
& \frac{\partial }{\partial t}\nabla^t A(t)= \Delta^t \nabla^t
A(t)+A(t)*A(t)*\nabla^t A(t)+\kappa(t)A(t)*\nabla^t A(t).
\end{split}
\end{equation}
Moreover, for every non-negative integer $m$,
\begin{equation*}
\begin{split}
& \frac{\partial }{\partial t}(\nabla^t)^m A(t)= \Delta^t
(\nabla^t)^m A(t)+\sum_{i+j+k=m,\ i,j,k \in \mathbb{N}}(\nabla^t)^i
A(t)*(\nabla^t)^j A(t)*(\nabla^t)^k A(t)\\
&+\sum_{i+j=m,\ i,j \in \mathbb{N}}\kappa(t)(\nabla^t)^i
A(t)*(\nabla^t)^j A(t).
\end{split}
\end{equation*}
So, from (\ref{4.3.4}), we know that $a_t:=A(T-t)$ satisfies the
equation (\ref{e3-1}) with respect to $\{\tilde g_t:=g_{T-t}\}_{t
\in [0,T]}$ and the operators $F_t$ and $\hat F_t$ should be chosen
to be $F_t=\big(-A(T-t)*A(T-t)*\big)-\kappa(T-t)A(T-t)*$ and $\hat
F_t=\big(-A(T-t)*A(T-t)*\big)-\kappa(T-t)A(T-t)*$, respectively.

Now, similar to Theorem \ref{theoremxx1}, we can obtain the
following.

\begin{theorem} \label{theoremxx2}
Suppose that for some constant $K>0$, $|A(x,t)|_t \le K$ for every
$(x,t) \in U\times[0,T]$, and there exist $x_0 \in M$, $r>0$ such
that $\overline{B_{g_t}(x_0,r)}\subseteq U$ for every $t \in [0,T]$,
then under the flow (\ref{4.3.3}) we have
\begin{equation*}
\left|\nabla^T A(x_0,T)\right|^2_{T}\le C(n){\rm
e}^{C(n)\left[K+k_{+}\right]KT}K^2\left(K^2+k_{+}K+r^{-2}\right)\left(1-{\rm
e}^{-C(n)(K^2+k_{+}K+r^{-2})T}\right)^{-1}
\end{equation*}
for some positive constant $C(n)$, where
$k_{+}:=\sup\limits_{0\leq{t}\leq T}|\kappa(t)|$ is the supremum of
$\kappa(t)$ on $[0,T]$. Moreover, for every positive integer $m$ and
$0<T<(K^2+k_{+}K+r^{-2})^{-1}\wedge 1$, we have
\begin{equation*}
\left|(\nabla^T)^m A(x_0,T)\right|_{T}\le
C(K,k_+,n,m)T^{-\frac{m}{2}}
\end{equation*}
for some positive constant $C(K,k_+,n,m)$.
\end{theorem}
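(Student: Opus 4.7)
The plan is to run exactly the scheme of Theorems \ref{T4-1}, \ref{T4-1a}, \ref{T4-2}, and \ref{theoremxx1}, only keeping careful track of the size of the lower-order terms produced by the forcing term $\kappa(t)$. Concretely, I set $a_t:=A(T-t)$, which by \eqref{4.3.4} satisfies \eqref{e3-1} with respect to the reversed metric $\tilde g_t:=g_{T-t}$, with $F_t = -A(T-t)*A(T-t)*\,-\,\kappa(T-t)A(T-t)*$ and the analogous $\hat F_t$ acting on $T^{m,k+1}M$. The assumption $|A|_t\le K$ on $U$, combined with $|\kappa(T-t)|\le k_+$, yields the uniform operator bounds $|F_t|_t,\,|\hat F_t|_t\le C(n)(K^2+k_+K)$. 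At the same time the metric evolution $\partial_t g_t = 2(\kappa-H)A$ gives $|\partial_t g_t|_t\le C(n)(K+k_+)K$, and by the Gauss equation the Ricci curvature of $g_t$ is bounded by $C(n)K^2$ on $U$.

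Let $X_t^T$ and $U_t^T$ be the Brownian motion and horizontal lift for $\{\tilde g_t\}$ with $X_0^T=x_0$, and let $Q_t^T$, $\hat Q_t^T$ be the multiplicative functionals determined by \eqref{Q1}, \eqref{Q2} with the $F_t$, $\hat F_t$ above. The bounds on $F_t,\hat F_t,\mathcal G^t$ immediately give the analogue of \eqref{T4-1-2}, namely $|Q_t^T|,|(Q_t^T)^{-1}|,|\hat Q_t^T|\le \exp[C(n)(K^2+k_+K)t]$. I then take the same distance cutoff $\bar f$, the same $f(t,x)=\bar f(\rho(T-t,x))$, the same time change $\Lambda(t),\tau(t)$, and the same test process $\tilde h_t=\hat h(\Lambda(t)\wedge T)$ as in the proof of Theorem \ref{T4-1}, with $\hat h(t):=C(K_1,r)(1-e^{-C(K_1,r)T})^{-1}v\int_0^{T-t}e^{-C(K_1,r)(T-s)}\,\mathrm ds$, except that now $K_1:=C(n)K^2$ coming from the Ricci bound above.

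The Laplacian comparison step in \eqref{T4-1-6}–\eqref{T4-1-7} goes through verbatim, because it uses only the Ricci bound $|\mathrm{Ric}_t|\le K_1$ and the bound on $\partial_t\rho$, which via $|\partial_t g_t|\le C(K+k_+)K$ is again of order $(K^2+k_+K)r$; combining this with the already-available bounds on $\bar f,\bar f',\bar f''$ produces the Gronwall estimate
\begin{equation*}
\mathbb{E}\bigl[f^{-2}(\tau(t),X_{\tau(t)}^T)\bigr]\le \exp\bigl[C(n)(K^2+k_+K+r^{-2})t\bigr].
\end{equation*}
Plugging this into the integrated version of \eqref{T4-1-4} and using $|N_t|\le K$ for $t\le\tau(T)$, the derivative formula \eqref{c1-3} of Theorem \ref{c1} gives the first-order estimate with the claimed constants $C(n)e^{C(n)(K^2+k_+K)T}K^2(K^2+k_+K+r^{-2})(1-e^{-C(n)(K^2+k_+K+r^{-2})T})^{-1}$.

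For the higher-order bound, I iterate as in Theorem \ref{T4-1a}: with $a_t:=(\nabla^{T-t})^{m-1}A(T-t)$, the evolution equation derived from the recursion for $\partial_t(\nabla^t)^m A$ listed in the text produces \eqref{e3-1} with an additional lower-order inhomogeneous term $\hat G_t$ built from already-controlled derivatives of $A$ of order $\le m-1$. Combining Theorem \ref{c1} on a slightly shrunken ball (using \eqref{r1-1a} with $K$ replaced by $K^2+k_+K$ to compare $g_0$ and $g_t$) with the inductive bound $|(\nabla^T)^{m-1}A|_T\le C(K,k_+,n,m-1)T^{-(m-1)/2}$ gives \eqref{th4-3-2}-analogue for the present theorem in exactly the same way as in the proof of Theorem \ref{T4-1a}, under the restriction $T<(K^2+k_+K+r^{-2})^{-1}\wedge 1$. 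The only real bookkeeping obstacle is checking that every occurrence of the constant $K^2$ that appeared in the MCF proof is correctly replaced by $K^2+k_+K$ here, i.e., that none of the intermediate estimates secretly used a tighter bound on $F_t$ or on the Ricci curvature than what is available; but since Ricci is still controlled purely by $K^2$ while $F_t,\hat F_t,\partial_t g_t$ pick up the $k_+K$ correction, each step goes through unchanged.
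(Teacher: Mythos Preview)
Your proposal is correct and follows essentially the same route as the paper's own proof, which simply refers back to Theorem~\ref{theoremxx1} and records the modified bounds $|F_t|_t,|\hat F_t|_t,|\partial_t g_t|_t\le C(K^2+k_+K)$ together with the unchanged Ricci bound $|{\rm Ric}_t|\le CK^2$ from the Gauss equation. One small bookkeeping slip: when you specify $\hat h$ you take $K_1=C(n)K^2$, but the constant $C(K_1,r)$ in $\hat h$ should be chosen to match the Gronwall exponent $C(n)(K^2+k_+K+r^{-2})$ you derive a few lines later, otherwise the final integral does not collapse to the exact claimed form; this is a cosmetic fix and does not affect the validity of the argument.
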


\begin{proof}
The proof is almost the same as that of Theorem \ref{theoremxx1}
except that in this case, by assumption, we have $|F_t(x)|_t\le
C(K^2+k_{+}K)$, $|\hat F_t(x)|_t \le C(K^2 +k_{+}K)$, and
$|\partial_t g_t(x)|_t \le C(K^2 +k_{+}K)$ when $(x,t) \in U \times
[0,T]$.
\end{proof}

\subsection{Local gradient estimate for the scalar curvature of the Yamabe flow}

We consider the Yamabe flow defined by the following evolution
equation
\begin{equation}\label{e5-1}
\left\{
  \begin{array}{lll}
    \frac{\partial}{\partial t}g_t(x)=-{\rm R}_t(x) g_t(x), & 0\le t<T_c, \\
    \\
    g_0(x)=\mathring{g}(x)
  \end{array}
\right.
\end{equation}
on an $n$-dimensional ($n\geq 3$) complete Riemannian manifold
$(M,\mathring{g})$, where ${\rm R}_t$ is the $g_t$-scalar curvature
associated with the metric $g_t$, and $\mathring{g}$ is the initial
metric for the flow.

The Yamabe flow was proposed by R. S. Hamilton \cite{Ham89} in the
1980¡¯s as a tool for constructing metrics of constant scalar
curvature in a given conformal class, where  the existence of a
global solution has been shown  for every initial metric. Hamilton's
pioneering work gives another approach to the Yamabe problem solved
by R. Schoen \cite{Sch}.

We assume that the equation \eqref{e5-1} holds on $[0,T]\times U$
for some constant $T\in(0,T_c)$ and open domain $U\subset M$,
$(M,g_t)$ is complete for every $t \in [0,T_c)$, but with no
requirement that $M$ is compact. Note that by
\cite[Lemma 2.2]{Chow}, for each $(t,x)\in [0,T]\times U$, we have
\begin{align}
&\frac{\partial}{\partial t}{\rm R}_t=(n-1)\Delta^t {\rm R}_t+{\rm R}_t^2, \label{e5-2}\\
&\frac{\partial}{\partial t} \nabla^t{\rm
R}_t=(n-1)\Delta^t(\nabla^t{\rm R}_t)+3{\rm R}_t\nabla^t {\rm R}_t.
\label{e5-3}
\end{align}
Set $a_t:={\rm R}_{\frac{T-t}{n-1}}$, from \eqref{e5-2} and
\eqref{e5-3} we can easily check that $a_t$ satisfies (\ref{e3-1})
associated with the metric $\{\tilde g_t=g_{\frac{T-t}{n-1}}\}$ and
the operators $F_t:=-\frac{a_t}{n-1}$ ,
$\hat{F}_t:=-\frac{3a_t}{n-1}$.

By applying Theorem \ref{c1},
we can give a local gradient estimate for the Yamabe flow (see
\cite[Theorem 3.4]{CZ12}).

\begin{theorem}[\cite{CZ12}] \label{theoremxx3}
Suppose that for some constant $K>0$, $|{\rm Ric}_t|\leq K$ for
every $(t,x)\in [0,T]\times U$, and there exist $x_0\in M$, $r>0$
such that $\overline{B_{g_t}(x_0,r)}\subset U$ for every $t\in
[0,T]$. Then, we have
\begin{align}\label{4.3.5}
|\nabla^T{\rm R}_T(x_0)|_T^2\leq \frac{C(n){\rm
e}^{C(n)KT}K^2(K+r^{-2})}{1-{\rm e}^{-C(n)(K+r^{-2})T}}
\end{align}
for some positive constant $C(n)$.
\end{theorem}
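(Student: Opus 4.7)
The plan is to mirror the strategy of Theorem \ref{T4-1} for the Ricci flow, since once we have set $a_t := {\rm R}_{(T-t)/(n-1)}$ to run backwards in time, $a_t$ solves the equation \eqref{e3-1} with respect to the time-changing metric $\tilde{g}_t := g_{(T-t)/(n-1)}$ and the scalar operators $F_t = -a_t/(n-1)$, $\hat{F}_t = -3a_t/(n-1)$. First, I would introduce the Brownian motion $X_t^T$ and stochastic horizontal lift $U_t^T$ for $\{\tilde g_t\}$ starting at $(x_0,u_0)$, together with $Q_t^T$, $\hat{Q}_t^T$ defined by \eqref{Q1}, \eqref{Q2}. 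Using $|{\rm Ric}_t| \le K$ and hence $|{\rm R}_t| \le nK$, I would verify the uniform bounds $|F_t|_t, |\hat F_t|_t \le CK$ and $|\partial_t \tilde g_t|_t \le CK$ on $U \times [0,T]$, which by the same argument as in \eqref{T4-1-2} gives $|Q_t^T|, |(Q_t^T)^{-1}|, |\hat Q_t^T| \le \e^{CKt}$ up to the first exit time $\tau_U$.

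Next I would apply Theorem \ref{c1} with a cutoff construction identical to that of Theorem \ref{T4-1}: take $\bar f \in C_b^\infty(\mathbb{R})$ satisfying \eqref{T4-1-1}, set $f(t,x) := \bar f(\rho(\tfrac{T-t}{n-1},x))$, define the random time change
\begin{equation*}
\Lambda(t) := \int_0^t f^{-2}(s,X_s^T)\mathbf{1}_{\{s<\tau_U \wedge T\}}\,\vd s, \qquad \tau(t) := \inf\{s\ge 0:\Lambda(s)\ge t\}\wedge T,
\end{equation*}
and choose the $T^{1,1}_{x_0}M$-valued process $\tilde h_t := \hat h(\Lambda(t)\wedge T)$ for a deterministic $\hat h \in C^1([0,T])$ with $\hat h(0) = v$, $\hat h(T)=0$. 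This construction ensures that $X_t^T$ remains inside $\overline{B_{\tilde g_t}(x_0,r)} \subseteq U$ while $t<\tau(T)$, so the hypotheses of Theorem \ref{c1} are satisfied and we get
\begin{equation*}
\langle \nabla^0 a_0(x_0), v\rangle = -\E\bigl[\langle N_{T\wedge\tau(T)}, l^{\tilde h}_{T\wedge\tau(T)}\rangle\bigr].
\end{equation*}

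The core analytic estimate is to control $\E[f^{-2}(\tau(t),X_{\tau(t)}^T)]$. Here I would apply the It\^o formula to $\rho(\tfrac{T-t}{n-1},X_t^T)$ as in \eqref{T4-1-5} and \eqref{T4-1-6}. The Laplacian comparison theorem (using ${\rm Ric}_t \ge -K$) gives $\Delta^{\tilde g_t}\rho \le \sqrt{K(n-1)}\coth(\sqrt{K/(n-1)}\,\rho)$, while $|\partial_t\rho| \le Cr\cdot \sup|{\rm R}_s|/(n-1) \le CKr$ by differentiating the length integral and using $|\partial_t \tilde g_t|_t \le CK$. These feed into the same Gr\"onwall argument as in Theorem \ref{T4-1} to yield
\begin{equation*}
\E\bigl[f^{-2}(\tau(t),X_{\tau(t)}^T)\bigr] \le \e^{C(K+r^{-2})t}.
\end{equation*}

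Finally, I would choose $C(K,r) := C(K+r^{-2})$ and
\begin{equation*}
\hat h(t) := \frac{C(K,r)\,v}{1-\e^{-C(K,r)T}}\int_0^{T-t}\e^{-C(K,r)(T-s)}\,\vd s,
\end{equation*}
compute $|\dot{\hat h}(t)|^2 = C(K,r)^2|v|^2\e^{-2C(K,r)(T-t)}/(1-\e^{-C(K,r)T})^2$ and use it together with the bounds on $Q_t^T, \hat Q_t^T$ to control $\E[|l^{\tilde h}_t|^2]$ via \eqref{T4-1-4} by $\e^{CKT}C(K,r)|v|^2/(1-\e^{-C(K,r)T})$. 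Combining this with $|N_t| \le \e^{CKt}|a_t| \le CK\e^{CKT}$ on $[0,\tau(T)]$ and Cauchy--Schwarz in \eqref{c1-3} produces exactly \eqref{4.3.5}. The only substantive obstacle I anticipate is a careful bookkeeping of the time-reparametrization $s = (T-t)/(n-1)$ in both the metric evolution and the distance derivative, since all of $F_t, \hat F_t, \mathcal{G}^t$, the Laplacian coefficient, and the Ricci comparison must be consistently renormalized by factors involving $n-1$; once this is done, every step transfers verbatim from the Ricci-flow proof.
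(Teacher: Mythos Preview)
Your proposal is correct and follows exactly the approach of the paper, which simply records the bounds $|F_t|_t,\ |\hat F_t|_t,\ |\partial_t\tilde g_t|_t \le CK$ on $[0,T]\times U$ and then defers entirely to the proof of Theorem~\ref{T4-1}. The only slip is a harmless typo: since $a_t={\rm R}_{(T-t)/(n-1)}$ is scalar, the process $\tilde h_t$ (and the vector $v$) should take values in $T^{0,1}_{x_0}M$, not $T^{1,1}_{x_0}M$.
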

\begin{proof}
As in the proof of Theorem \ref{T4-1a}, let $X_t^T$, $U_t^T$ be the
Brownian motion and stochastic horizontal lift associated with the
time changing metric $\{\tilde g_t\}$ respectively, and let $Q_t^T$,
$\hat Q_t^T$ be separately defined by (\ref{Q1}) and (\ref{Q2}) with
respect to $X_t^T$, $U_t^T$ and the operators $F_t$, $\hat F_t$
given above.

Note that, $|F_t(x)|_t\leq CK$,  $|\hat{F}_t(x)|\leq CK$, and
$|\partial_t \tilde g_t|_t\le CK$ for every $(t,x)\in [0,T]\times
U$, and $\overline{B_{\tilde g_t}(x_0, r)}\subset U$ for every $t\in
[0,T]$.
Then, following the same steps in the proofs of Theorems \ref{T4-1}
and \ref{T4-1a} we can prove the conclusion.
\end{proof}
\begin{remark}\label{r2-2}
By a similar discussion as in Remark \ref{r1}, from the estimate
\eqref{4.3.5} we can obtain
$$|\nabla^T{\rm R}_T(x_0)|_T^2\leq C(n)K^2\l(K+\frac{1}{r^2}+\frac{1}{T}\r)$$
for some positive constant $C(n)$. This form is consistent with
\cite[Theorem 3.4]{CZ12}.
\end{remark}

\section*{Acknowledgments}
\renewcommand{\thesection}{\arabic{section}}
\renewcommand{\theequation}{\thesection.\arabic{equation}}
\setcounter{equation}{0} \setcounter{maintheorem}{0}

 The
second author was supported by the starting up research fund (Grant
No. 109007329) supplied by Zhejiang University of Technology. The
third author was partially supported by the starting-up research
fund (Grant No. HIT(WH)201320) supplied by Harbin Institute of
Technology (Weihai), the project (Grant No. HIT.NSRIF.2015101)
supported by Natural Scientific Research Innovation Foundation in
Harbin Institute of Technology,  and the NSF of China (Grant No.
11401131). The last revision of this paper before submission was
carried out when the third author visited the Chern Institute of
Mathematics (CIM), Nankai University in December 2013, and the third
author here is grateful to Prof. Shao-Qiang Deng for the hospitality
during his visit to CIM. The third author would like to thank CIM
for supplying the financial support during his visit through the
Visiting Scholar Program. The authors would like to thank the
anonymous referee for his or her careful reading and valuable
comments such that the article appears as its present version.


\begin{thebibliography}{99}







\bibitem{ACT} M. Arnaudon, K. Coulibaly and A. Thalmaier, Brownian motion with respect to a metric depending on time: definition, existence and applications to Ricci flow,
\emph{ C. R. Math. Acad. Sci. Paris} {\bf 346} (2008) 773--778.

\bibitem{ADT} M. Arnaudon, B. K. Driver and A. Thalmaier, Gradient estimates for positive harmonic functions by
stochastic analysis, \emph{Stochastic Processes Appl.} {\bf 117}
(2007) 202--220.

\bibitem{AT} M. Arnaudon and A. Thalmaier, Complete lifts of connections and stochastic Jacobi
fields, \emph{J. Math. Pures Appl.} {\bf 77} (9) 283--315.


\bibitem{Chow} B. Chow, The Yamabe flow on locally conformally flat manifolds with positive Ricci curvature, \emph{Comm. Pure Appl. Math.}
{\bf 45} (1992) 1003--1014.

\bibitem{BLN} B. Chow, P. Lu and L. Ni, Hamilton's Ricci flow, \emph{Graduate Studies in Mathematics}
\textbf{77}, American Mathematical Society. RI (2006).

 \bibitem{cheng} {L.-J. Cheng}, { Diffusion process on time-inhomogeneous manifolds},  preprint (2012).

 \bibitem{CZ12} L. Cheng and A. Zhu, Yamabe flow and ADM Mass on asymptotically flat manifolds, available online at arXiv:1109.2443.

\bibitem{Driver} B. K. Driver and A. Thalmaier,
Heat equation derivative formulas for vector bundles, \emph{J.
Funct. Anal.} {\bf 183} (1) (2001) 42--108.


\bibitem{emery} M. \'{E}mery, Stochastic Calculus in Manifolds, With an appendix by P.-A. Meyer. Universitext. \emph{Springer-Verlag}, Berlin, 1989.


\bibitem{KC} K. Coulibaly, Brownian motion with respect to time-changing Riemannian metrics, applications to Ricci flow,
\emph{ Ann. Inst. H. Poincar\'{e} Probab. Statist.} {\bf 47} (2)
(2011) 515--538.

\bibitem{CR} M. Cranston, Gradient estimates on manifolds using coupling, \emph{J.
Funct. Anal.} {\bf 99} (1991) 110--124.

\bibitem{EH} K. Ecker and G. Huisken, Interior estimates for hypersurfaces moving
by mean curvature, \emph{Invent. math.} {\bf 105} (1989) 547--569.



\bibitem{Ham} R. S. Hamilton,  Three-manifolds with positive Ricci curvature,
\emph{J. Differential Geom.} {\bf 17} (2) (1982) 255--306.

\bibitem{Ham89} R. S. Hamilton, Lectures on geometric flows, 1989
(unpublished).

\bibitem{Hsu} E. Hsu, Stochastic analysis on manifolds, \emph{American Mathematical Society}, (2002).

\bibitem{Hus} G. Huisken, Flow by mean curvature of convex surfaces into spheres,
\emph{J. Differential Geom.} {\bf 20} (1984) 237--266.

\bibitem{Hus2} G. Huisken, The volume preserving mean curvature flow,
\emph{J. Reine Angew. Math.} {\bf 382} (1987) 35--48.

\bibitem{KP}
K. Kuwada and R. Philipowski, Non-explosion of diffusion processes
on manifolds with time-dependent metric, \emph{Math. Z.} {\bf 268}
(3-4) (2011) 979--991.

\bibitem{lmw} G. Li, J. Mao and C. Wu, Convex mean curvature flow with a
forcing term in direction of the position vector, \emph{Acta
Mathematica Sinica, English Series} {\bf 28} (2) (2012) 313--332.

\bibitem{li} G. Li and I. Salavessa, Forced convex mean curvature flow in Euclidean
spaces, \emph{Manuscript Math.} {\bf 126} (2008) 335--351.


\bibitem{M} C. Mantegazza, Lecture Notes on Mean Curvature Flow, \emph{Progress in Mathematics}
\textbf{290} Birkhauser, Basel, (2011).

\bibitem{mlw} J. Mao, G. Li and C. Wu, Entire graphs under a general
flow, \emph{Demonstratio Mathematica} {\bf XLII} (2009) 631--640.


\bibitem{m1} J. Mao, Forced hyperbolic mean curvature flow, \emph{Kodai Math. J.} {\bf 35} (3) (2012)
500--522.

\bibitem{m2} J. Mao, Deforming two-dimensional graphs in $R^4$ by
forced mean curvature flow, \emph{Kodai Math. J.} {\bf 35} (3)
(2012) 523--531.

\bibitem{m3} J. Mao, Monotonicity of the first eigenvalue of
the Laplace and the $p$-Laplace operators under a forced mean
curvature flow, available online at arXiv:1310.5437v4.

\bibitem{MT} R. J. McCann and P. Topping, Ricci flow, entropy and optimal transportation,
\emph{Am. J. Math.} {\bf 132} (3) (2010) 711--730.

\bibitem{p1} G. Perelman, The entropy formula for the Ricci flow and
its geometric applications, available online at
arXiv:math.DG/0211159.

\bibitem{mccoy} J. McCoy, The surface area preserving mean curvature
flow, \emph{Asian J. Math.} {\bf 7} (1) (2003) 7--30.

\bibitem{mccoy2} J. McCoy, The mixed volume preserving mean curvature
flow, \emph{Math. Z.} {\bf 246} (2004) 155--166.

\bibitem{Shi} W.-X. Shi, Deforming the metric on complete Riemannian manifolds,
\emph{J. Differential Geom.} {\bf 30} (1) (1989) 223--301.

\bibitem{Shi1} W.-X. Shi, Ricci Deformation the metric
on complete non-compact Riemannian manifolds, \emph{J. Differential
Geom.} {\bf 30} (1) (1989) 303--394.

\bibitem{Sch} R. Schoen, Conformal deformation of a Riemannian metric to constant scalar curvature,
\emph{J. Differential Geom.} {\bf 20} (1984) 479--495.

\bibitem{TW} A. Thalmaier and F.-Y. Wang,
Gradient estimates for harmonic functions on regular domains in
Riemannian manifolds, \emph{J. Funct. Anal.} {\bf 155} (1998)
109--124.

\bibitem{Top} P. Topping, Lecture notes on the Ricci flow, \emph{London Mathematical
Society Lecture Note Series} {\bf 325}, Cambridge University Press,
(2006).

\bibitem{Wbook1} F.-Y. Wang, Functional Inequalities, Markov Processes and Spectral Theory, \emph{Science
Press,} Beijing, (2005).


\bibitem{Wbook2} F.-Y. Wang, Analysis for Diffusion Processes on Riemannian Manifolds,
\emph{Advanced Series on Statistical Science and Applied
Probability} {\bf 18}, World Scientific, (2013).


\bibitem{Zhux} X.-P. Zhu, Lectures on Mean Curvature Flows, AMS/IP
Stud. Adv. Math., Vol. 32, \emph{American Mathematical
Society/International Press}, Providence, RI/Somerville, MA, (2002).


\end{thebibliography}
 \end{document}